\theoremstyle{plain}
\def\l@section{\@tocline{1}{0pt}{1pc}{}{}}
\def\l@subsection{\@tocline{2}{0pt}{1pc}{4.6em}{}}
\def\l@subsubsection{\@tocline{3}{0pt}{1pc}{7.6em}{}}
\renewcommand{\tocsection}[3]{%
  \indentlabel{\@ifnotempty{#2}{\makebox[2.3em][l]{%
    \ignorespaces#1 #2.\hfill}}}#3}
\renewcommand{\tocsubsection}[3]{%
  \indentlabel{\@ifnotempty{#2}{\hspace*{2.3em}\makebox[2.3em][l]{%
    \ignorespaces#1 #2.\hfill}}}#3}
\renewcommand{\tocsubsubsection}[3]{%
  \indentlabel{\@ifnotempty{#2}{\hspace*{4.6em}\makebox[3em][l]{%
    \ignorespaces#1 #2.\hfill}}}#3}
\newtheorem{theorem}{Theorem}
\newtheorem{definition}[theorem]{Definition}
\newtheorem{lemma}[theorem]{Lemma}
\newtheorem{proposition}[theorem]{Proposition}
\newtheorem{corollary}[theorem]{Corollary}
\newtheorem{remark}[theorem]{Remark}
\newcommand\es{\varnothing}
\newcommand\ol{\overline}
\newcommand\Aut{\mathrm{Aut}}
\newcommand\sQ{{\mathcal Q}}
\newcommand\sH{{\mathcal H}}
\newcommand\RR{{\mathbb R}}
\newcommand\ZZ{{\mathbb Z}}
\newcommand\PP{{\mathbb P}}
\renewcommand\a{\alpha}
\newcommand\om{\omega}
\newcommand\g{\gamma}
\newcommand\be{\beta}
\newcommand\si{\sigma}
\newcommand\De{\Delta}
\newcommand\qq{\qquad}
\newcommand\q{\quad}
\newcommand\resp{respectively}
\newcommand\oo{\infty}
\newcommand\sG{{\mathcal G}}
\newcommand\sN{{\mathcal N}}
\newcommand\sM{{\mathcal M}}
\newcommand\Ga{\Gamma}
\newcommand\La{\Lambda}
\newcommand\Si{\Sigma}
\newcommand\la{\lambda}
\newcommand\id{{\bf 1}}
\newcommand\bigmid{\,\big|\,}
\newcommand\pc{p_{\text{\rm c}}}
\renewcommand\ell{l}
\newcommand\pd{\partial}
\newcommand\sm{\,\triangle\,}
\newcommand\ghf{graph height function}
\newcommand\sghf{strong \ghf}
\newcommand\GHF{group height function}
\newcommand\Stab{\mathrm{Stab}}
\newcommand\hdi{$\sH$-difference-invariant}
\renewcommand\o{{\mathrm o}}
\newcommand\normal{\trianglelefteq}
\newcommand\EG{\mathrm{EG}}
\newcommand\EGF{\mathrm{EFG}}  
\newcommand\BS{\mathrm{BS}}
\newcommand\dist{\mathrm{dist}}
\newcommand\Od{\mathrm{ord}}
\newcommand\saw{self-avoiding walk}
\newcommand\pde{\partial_{\text{\rm e}}}
\newcommand\sO{{\mathcal O}}
\newcounter{mycount}
\newenvironment{romlist}{\begin{list}{\rm(\roman{mycount})}%
   {\usecounter{mycount}\labelwidth=1cm\itemsep 0pt}}{\end{list}}
\newenvironment{letlist}{\begin{list}{\rm(\alph{mycount})}%
   {\usecounter{mycount}\labelwidth=1cm\itemsep 0pt}}{\end{list}}
\numberwithin{equation}{section}
\numberwithin{theorem}{section}
\numberwithin{figure}{section}
\title{Self-avoiding walks and amenability}
\author{Geoffrey R.\ Grimmett}
\address{Statistical Laboratory, Centre for
Mathematical Sciences, Cambridge University, Wilberforce Road,
Cambridge CB3 0WB, UK} 
\email{g.r.grimmett@statslab.cam.ac.uk, }
\urladdr{\url{http://www.statslab.cam.ac.uk/~grg/}}
\author{Zhongyang Li}
\address{Department of Mathematics,
University of Connecticut,
Storrs, Connecticut 06269-3009, USA}
\email{zhongyang.li@uconn.edu}
\urladdr{\url{http://www.math.uconn.edu/~zhongyang/}}
\begin{document}
\begin{abstract}
The connective constant $\mu(G)$ of an infinite transitive graph $G$
is the exponential growth rate of the number of self-avoiding walks 
from a given origin.  The relationship between connective constants and amenability
is explored in the current work.

Various properties of connective constants depend on
the existence of so-called \lq\ghf s\rq, namely: (i) whether 
$\mu(G)$ is a local function on certain graphs derived from $G$, 
(ii) the equality of $\mu(G)$
and the asymptotic growth rate of bridges,
and (iii) whether there exists a terminating algorithm for 
approximating $\mu(G)$ to a given degree of accuracy.

In the context  of amenable groups,
it is proved that the Cayley graphs of infinite, finitely generated, elementary amenable
groups support \ghf s, which are in addition harmonic.
In contrast, the Cayley graph of the Grigorchuk group,
which is amenable but not elementary amenable, does not have a \ghf.

In the context of non-amenable, transitive graphs, a lower bound is presented for the connective constant
in terms of the spectral bottom of the graph.  This is a strengthening of
an earlier result of the same authors. 
Secondly, using a percolation inequality of Benjamini, Nachmias, and Peres, it is explained that
the connective constant of a non-amenable, transitive graph 
with large girth is close to that of a regular tree.
Examples are given of non-amenable groups without \ghf s,
of which one is the Higman group. 
\end{abstract}

\date{24 November 2015}   

\keywords{Self-avoiding walk, connective constant, 
Cayley graph,  amenable group, elementary amenable group, Grigorchuk group,
Higman group, Baumslag--Solitar group, graph height function, group height function, harmonic function,
spectral radius, spectral bottom}
\subjclass[2010]{05C30, 20F65, 60K35, 82B20}
\maketitle

\tableofcontents

\part{Background, and summary of results}

\section{Introduction}\label{sec:intro}

\subsection{Background}

A \emph{\saw} on a graph $G=(V,E)$ is a path that visits no vertex more than once.
The study of the number $\si_n$ of \saw s of length $n$ from a given initial vertex was initiated
by Flory \cite{f} in his work on polymerization, and this topic has acquired an iconic status
in the mathematics and physics associated with lattice-graphs. Hammersley and
Morton \cite{hm} proved in 1954 that, if $G$ is vertex-transitive, there exists a constant $\mu=\mu(G)$,
called the \emph{connective constant} of $G$, 
such that $\si_n=\mu^{n(1+\o(1))}$ as $n \to\oo$.
This result is important not only for its intrinsic value, but also because its proof 
contained the introduction of
subadditivity to the theory of interacting systems. Subsequent work has concentrated on
understanding polynomial corrections in the above asymptotic for $\si_n$
(see, for example, \cite{bdgs,ms}), and on finding exact values and inequalities for connective constants
(for example, \cite{ds,GrLrev}).

There are several natural questions about connective constants whose answers depend on 
whether or not the underlying graph admits a so-called \ghf.
The first of these is whether $\mu(G)$ is a continuous
function of the graph $G$ (see \cite{Ben13,GL-loc}). This so-called \emph{locality} question
has received serious attention also in the context of percolation and other disordered systems
(see \cite{bnp,mt,psn}),
and has been studied in recent work of the current authors on general transitive
graphs, \cite{GL-loc},  and also on Cayley graphs of finitely generated groups, \cite{GL-Cayley}.
Secondly, when $G$ has a \ghf, one may define 
\emph{bridge} \saw s on $G$, and show that their numbers grow asymptotically
in the same manner as $\si_n$ (see \cite{GL-loc}). 
The third such question is whether there exists a terminating
algorithm to approximate $\mu(G)$ within any given (non-zero) margin of accuracy
(see \cite{GL-loc, GrL3}).

Roughly speaking, a \ghf\ on $G=(V,E)$ is a non-constant 
function $h:V\to\ZZ$ whose increments are invariant
under the action of a finite-index subgroup of automorphisms (a formal definition may be found  
at Definition \ref{def:height}).  It is, therefore, useful to know which transitive graphs support \ghf s. 

A method for constructing \ghf s on a certain class of
transitive graphs is described in \cite{GL-loc},
and the question is posed there of deciding whether all transitive graphs support \ghf s.
A rich source of interesting examples of transitive graphs is provided by Cayley graphs of finitely
generated groups, as studied in \cite{GL-Cayley}. It is proved there that the Cayley graphs
of finitely generated, virtually solvable groups support \ghf s, which are in addition harmonic.
The question is posed of determining whether or not the Cayley graph of the Grigorchuk group
possesses a  \ghf.

We are concerned here with the relationship between connective constants
and amenability, and we present results for both amenable and for non-amenable graphs.
Since these results are fairly distinct, we summarize them here under the
two headings of amenable groups and non-amenable graphs.

\subsection{Amenable groups}

This part of the current work has two principal results, one positive and the other negative.
\begin{letlist}
\item
(Theorem \ref{eag}) It is proved that every Cayley graph of an infinite, finitely generated, 
elementary amenable group supports a \ghf. This extends \cite[Thm 5.1]{GL-Cayley}
beyond the class of virtually solvable groups. 

\item
(Theorem \ref{grig}) It is proved that
the Cayley graph of the Grigorchuk group does not support a \ghf. This answers 
in the negative the above question of \cite{GL-loc} (see also \cite[Sect.\ 5]{GL-Cayley}).
Since the Grigorchuk group is amenable (but not elementary amenable), 
possession of a  \ghf\ is not a characteristic of amenable groups.
This is in contrast with work of Lee and Peres, \cite{LeeP}, who have studied the existence of
non-constant, Hilbert space valued, equivariant harmonic
maps on amenable graphs.
\end{letlist}

\subsection{Non-amenable graphs}

In earlier work \cite{GL-Comb}, it was shown that the connective constant $\mu$ of
 a transitive, simple graph with degree $\De$ satisfies
 $$
 \sqrt{\De-1}\le \mu \le \De -1,
 $$
 and it was asked whether or not the lower bound is sharp. 
 In the first of the following three results, this
 is answered in the negative for non-amenable graphs.
 
 \begin{letlist}
 \item
 (Theorem \ref{thm:newin}) It is proved that
 $$
 (\De-1)^{\frac12(1+c\lambda)} \le \mu,
 $$
 where $c=c(\De)$ is a known constant, and $\lambda$ is the spectral bottom of the simple
 random walk on the graph. Kesten \cite{K59b,K59a} and Dodziuk \cite{Dod}
 have shown that $\lambda>0$ if and only if the graph is  non-amenable.
 
 \item
 (Theorem \ref{thm:pc}) Using a percolation result of Benjamini, Nachmias, and Peres \cite{bnp}, it is explained
 that the connective constant of a non-amenable, $\De$-regular graph with large girth is
 close to that of the $\De$-regular tree.
 
 \item 
 (Theorem \ref{thm:hg1})
 It is shown that the Cayley graph of the Higman group of \cite{Hg} (which is non-amenable) does
 not support a \ghf. This further example of a transitive graph without a \ghf\ complements the corresponding
 statement above for the (amenable) Grigorchuk group.
 
 \end{letlist}
 
Relevant notation for graphs, groups, and self-avoiding walks is summarized
in Section \ref{sec:Cay}, and three different
types of height functions are explained in Section \ref{sec:ht}.
The class $\EG$ of elementary amenable groups
is described in Sections \ref{sec:eag} and \ref{sec:proof1}. 
The Grigorchuk group is defined
in Section \ref{sec:grig} and the non-existence of \ghf s thereon is given in Theorem \ref{grig}.
The improved lower bound for $\mu(G)$ for non-amenable $G$ is presented at Theorem \ref{thm:newin},
and the remark about non-amenable graphs with large girth at Theorem \ref{thm:pc}.
The Higman group is discussed in Section \ref{sec:hig}. Proofs of theorems appear either
immediately after their statements, or are deferred to self-contained sections at the end of the article.

\section{Graphs, groups, and self-avoiding walks}\label{sec:Cay}

\subsection{Graphs}

The graphs $G=(V,E)$ in this paper are \emph{simple}, 
in that they have neither loops nor multiple edges. 
The \emph{degree} $\deg(v)$ of vertex $v\in V$ is the number of edges incident to $v$.
We write $u \sim v$ for neighbours $u$ and $v$,  
$\pd v$ for the neighbour set of $v$, and $\pde v$ (\resp, $\pde W$) for the set of edges incident to $v$
(\resp, between $W$ and $V \setminus W$).
The graph is \emph{locally finite} if $|\pd v|<\oo$ for $v \in V$.  
An edge from $u$ to $v$ is denoted $\langle u,v\rangle$
when undirected, and $[u,v\rangle$ when directed from $u$ to $v$.
The \emph{girth} of $G$ is the infimum of the lengths of its circuits.
The infinite $\De$-regular tree $T_\De$ crops up periodically in this paper.

The automorphism group of  $G$ is
denoted $\Aut(G)$. The subgroup $\Ga\le\Aut(G)$ is said to act \emph{transitively} on $G$
if, for $u,v\in V$, there exists
$\a\in\Aut(G)$ with $\a(u)=v$. 
It acts \emph{quasi-transitively} if there exists a finite
subset $W \subseteq V$ such that, for $v\in V$, there exists $\a\in\Ga$ and $w\in W$
such that $\a(v)=w$.
The graph $G$ is said to be \emph{(vertex-)transitive} if $\Aut(G)$ acts transitively
on $V$.

Let $\sG$ be the set of infinite, locally finite, connected, transitive, simple graphs, and let $G \in \sG$.
The \emph{edge-isoperimetric constant} $\phi=\phi(G)$ is defined here as
\begin{equation}\label{eq:iso}
\phi :=\inf\left\{\frac{|\pde W|}{\De|W|}: W \subset V,\ 0<|W|<\oo\right\}.
\end{equation}
We call $G$ \emph{amenable} if $\phi=0$ and \emph{non-amenable} otherwise.
See \cite[Sect.\ 6]{LyP} for an account of graph amenability.

\subsection{Self-avoiding walks}

Let $G \in \sG$. We choose a vertex of $G$ and call it the \emph{origin}, denoted $\id$.
An \emph{$n$-step self-avoiding walk} (SAW) 
on $G$ is  a walk containing $n$ edges
no vertex of which appears more than once.
Let $\Si_n$ be the set of $n$-step SAWs starting at $\id$, with
cardinality $\si_n:=|\Si_n|$. 
We have in the usual way (see \cite{hm,ms}) that
\begin{equation}\label{eq:sisub}
\si_{m+n} \le \si_m\si_n,
\end{equation}
whence the \emph{connective constant}
\begin{equation}\label{eq:sisub2}
\mu=\mu(G) :=\lim_{n\to\oo} \si_n^{1/n}
\end{equation}
exists. 

A SAW is called \emph{extendable} if it is the initial portion of an infinite SAW on $G$.
(An extendable SAW is called `forward extendable' in \cite{GHP}.)

\subsection{Groups}

Let $\Ga$ be a group with
generator set $S$ satisfying $|S|<\oo$ 
and $\id\notin S$, where
$\id=\id_\Ga$ is the identity element.  We shall assume that $S^{-1}=S$,
while noting that this  was not assumed in \cite{GL-Cayley}.
We write $\Ga =\langle S \mid R\rangle$ with $R$ a set of relators
(or relations, when convenient).
Such a group is called \emph{finitely generated}, and is called \emph{finitely presented}
if, in addition,  $|R|<\oo$.

The \emph{Cayley graph} of the presentation 
$\Ga=\langle S \mid R\rangle$ is the simple graph $G=G(\Ga,S)$
with vertex-set $\Ga$, and an (undirected) edge $\langle \g_1,\g_2\rangle$ if and only
if $\g_2=\g_1s$ for some $s\in S$. Thus, our Cayley graphs are simple graphs.
See \cite{bab95,LyP} for accounts of Cayley graphs, and \cite{dlH} of geometric group theory.

The amenability of groups was introduced by von Neumann \cite{vN}.
It is standard that a finitely generated group is amenable if and only if
some (and hence every) Cayley graph is amenable
(see, for example, \cite[Chap.\ 12A]{Wo}).

\section{Height functions}\label{sec:ht}

It was shown in \cite{GL-loc} that graphs $G \in\sG$ supporting 
so-called `\ghf s' have (at least) three properties:
\begin{romlist}
\item one may define the concept of a `bridge' SAW on $G$, as in \cite{HW62},
\item the exponential growth rate for counts of bridges equals the connective constant $\mu(G)$,
\item there exists a terminating algorithm for determining $\mu(G)$ to within
any prescribed (strictly positive) degree of accuracy.
\end{romlist} 

Several natural sub-classes of $\sG$ contain only graphs that support \ghf s, and
it was asked in \cite{GL-Comb} whether or not \emph{every} $G \in \sG$ supports
a \ghf. This question will be answered in the negative at Theorems \ref{grig} and \ref{thm:hg1},
where it is proved that neither the Grigorchuk nor Higman graphs possess a \ghf.
Arguments for proving the non-existence of \ghf s may be found in Section \ref{sec:noht}.

We review the definitions of the two types of height functions, and introduce a third type.
Let $G=(V,E)\in\sG$, and let $\sH \le\Aut(G)$.
A function $F:V \to \RR$ is said to be \emph{\hdi} if 
\begin{equation}\label{eq:hdi2}
F(v)-F(w)=F(\g v)-F(\g w), \qq v,w\in V,\ \g\in \sH.
\end{equation}

\begin{definition}[\cite{GL-loc}] \label{def:height}

A \emph{graph height function} on $G$ is a pair $(h,\sH)$, where 
$\sH\le \Aut(G)$ acts quasi-transitively on $G$ and  $h:V \to \ZZ$, such that:
\begin{letlist}
\item $h(\id)=0$,
\item $h$ is \hdi,
\item for  $v\in V$,
there exist $u,w \in \pd v$ such that
$h(u) < h(v) < h(w)$.
\end{letlist}
\end{definition}

\begin{remark}\label{rem:Poin}
By Poincar\'e's Theorem for subgroups (see \cite[p.\ 48, Exercise 20]{Her}), it is immaterial
whether or not we require the subgroup $\sH$ to be \emph{normal} in Definition \ref{def:height}.
\end{remark}

We turn to Cayley graphs of finitely generated groups.
Let $\Ga$ be a finitely generated group with presentation $\langle S\mid R\rangle$.
As in Section \ref{sec:Cay}, we assume $S^{-1}=S$ and $\id \notin S$.

\begin{definition}\label{def:GHF}
A \emph{\GHF}\ on $\Ga$ (or on a Cayley graph of $\Ga$) is a function $h:\Ga\to\ZZ$ such that:
\begin{letlist}
\item $h(\id)=0$, and $h$ is not identically zero,
\item if $\g=s_1s_2\cdots s_m$ with $s_i \in S$, then $h(\g)=\sum_{i=1}^m h(s_i)$,
\item the values $(h(s): s\in S)$ are such that, if $s_1s_2\cdots s_n=\id$
is a representation of the identity with $s_i\in S$, then $\sum_{i=1}^n h(s_i)=0$.  
\end{letlist}
\end{definition}

A necessary and sufficient condition for the existence of a \GHF\ is given in \cite[Thm 4.1]{GL-Cayley}.
In the language of group theory, this condition amounts to requiring that the first Betti number is 
strictly positive.
It was recalled in \cite[Remark 4.2]{GL-Cayley} that
(when the non-zero $h(s)$, $s\in S$, are coprime) a \GHF\ is simply a surjective homomorphism
from  $\Ga$ to $\ZZ$.

We introduce a third type of height function, which may be 
viewed as an intermediary between a  \ghf\ and \GHF.

\begin{definition}\label{def:sghf}
For a Cayley graph $G$ of
a finitely generated group $\Ga$, we say that the pair $(h,\sH)$ is
a \emph{\sghf} of the pair $(\Ga,G)$ if
\begin{letlist}
\item $\sH\normal \Ga$ acts on $\Ga$ by left multiplication, and $[\Ga:\sH]<\oo$, and
\item $(h,\sH)$ is a \ghf.
\end{letlist}
\end{definition}

It is evident that a \GHF\ $h$ (of $\Ga$)  is a \sghf\ of the form $(h,\Ga)$, and a \sghf\ is a \ghf.
The assumption in (a) above of the normality of $\sH$ is benign, as in Remark \ref{rem:Poin}.  

We recall the definition  of a harmonic function. A function $h:V \to\RR$ is called \emph{harmonic}
on the graph $G=(V,E)$ if
$$
h(v) = \frac1{\deg(v)}\sum_{u \sim v} h(u), \qq v \in V.
$$
It is an exercise to show that any \GHF\ is harmonic.

\part{Results for amenable groups}

\section{Elementary amenable groups}\label{sec:eag}

The class $\EG$ of elementary amenable groups was introduced by Day in 1957, \cite{Day57}, as the 
smallest class of groups that contains the set $\EG_0$ of all finite and abelian groups, 
and is closed under the operations 
of taking subgroups, and of forming quotients, extensions, and directed unions.
Day noted that every group in $\EG$ is amenable (see also von Neumann \cite{vN}). 
An important example of an amenable but not elementary amenable group was described by 
Grigorchuk in 1984, \cite{Grig84}. 
Grigorchuk's group is important in the study of height functions, and
we return to this in Section \ref{sec:grig}.

Let $\EGF$ be the set of
infinite, finitely generated members of $\EG$. 

\begin{theorem}\label{eag}
Let $\Ga\in\EGF$. Any locally finite Cayley graph $G$ of $\Ga$ admits a harmonic, strong \ghf.
\end{theorem}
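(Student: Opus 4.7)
The theorem reduces, via the same template as \cite[Thm 5.1]{GL-Cayley}, to the following structural claim: \emph{every infinite, finitely generated $\Ga \in \EG$ contains a normal subgroup $\sH \normal \Ga$ of finite index admitting a surjective homomorphism $\phi : \sH \to \ZZ$.} Granted this, I extend $\phi$ to $h : \Ga \to \ZZ$ by setting $h(\g v_i) := \phi(\g) + c_i$ for $\g \in \sH$, where $\{v_1 = \id, v_2, \ldots, v_k\}$ is a transversal of $\sH$ in $\Ga$; then $h$ is automatically \hdi. The edge increments $h(vs) - h(v)$ depend only on the $\sH$-coset of $v$ and on $s \in S$, so the harmonic condition becomes a finite linear system in the unknowns $c_2, \ldots, c_k$ on the Cayley graph of $\Ga/\sH$. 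This system has corank one---its equations sum to zero because $S = S^{-1}$ and $\sH$ is normal---and is solvable over $\QQ$ by connectedness of the finite quotient graph, yielding integer values after clearing denominators (rescaling $\phi$). Harmonicity together with non-triviality of $\phi$ then forces every vertex to have both a strictly higher and a strictly lower neighbour, giving the growth condition.

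For the structural claim I argue by transfinite induction on the least $\alpha$ with $\Ga \in \EG_\alpha$, where $\EG_0$ is the class of finite and abelian groups and $\EG_{\alpha+1}$ is the closure of $\bigcup_{\beta \le \alpha} \EG_\beta$ under extensions and directed unions (unions at limit ordinals). Infinite finitely generated abelian groups have the form $\ZZ^r \oplus F$ with $r \ge 1$ and $F$ finite, so projection onto a $\ZZ$-factor handles the base case with $\sH = \Ga$. A directed union $\Ga = \bigcup_i \Ga_i$ of groups at lower levels collapses to a single $\Ga_{i_0}$, because the finite generating set of $\Ga$ lies in some $\Ga_{i_0}$, so $\Ga$ is already of lower complexity. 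In an extension $1 \to N \to \Ga \to Q \to 1$ with $N,Q$ at lower levels, two cases arise. If $Q$ is infinite, apply induction to the finitely generated quotient $Q$ to obtain $\sK \normal Q$ of finite index with surjective $\psi : \sK \to \ZZ$, and take $\sH := \pi^{-1}(\sK)$ with $\phi := \psi \circ \pi|_\sH$, where $\pi : \Ga \to Q$. If $Q$ is finite, then $N$ has finite index in $\Ga$, is finitely generated by Schreier's lemma, and is infinite (else $\Ga$ would be finite), so induction yields $\sK \normal N$ of finite index with $\psi : \sK \to \ZZ$ surjective. Set $\sK_0 := \bigcap_{\g \in \Ga} \g \sK \g^{-1}$; this is a finite intersection (since $\sK$ has finite index in $\Ga$), hence normal and of finite index in $\Ga$, and is contained in $N$ by normality of $N$. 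As $\sK_0$ has finite index in $\sK$, its image $\psi(\sK_0)$ is a finite-index, hence nontrivial, subgroup $m\ZZ$ of $\ZZ$, and $(\psi/m)|_{\sK_0}$ is the required surjection.

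I expect the main obstacle to be this last (finite-$Q$) subcase, where one must simultaneously refine $\sK \normal N$ to a subgroup normal in all of $\Ga$ while preserving a surjection onto $\ZZ$. Naively averaging $\psi$ over $\Ga/\sK$ may annihilate it---for instance, in the infinite dihedral group the quotient $\ZZ/2$ acts on $\ZZ$ by negation and kills any integer-valued homomorphism---so the argument above sidesteps averaging and exploits instead the elementary fact that finite-index subgroups of $\ZZ$ are automatically nontrivial. Once the structural claim is in hand, the harmonic \sghf\ follows from the construction outlined in the first paragraph, and the proof is complete.
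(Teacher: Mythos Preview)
Your proof is correct and follows the same transfinite induction over the hierarchy $\EG_\alpha$ as the paper's proof of Theorem~\ref{st}, with the same case analysis: abelian base case, directed unions collapse by finite generation, and extensions split according to whether the quotient is infinite or finite. The one organisational difference is that you isolate the group-theoretic \emph{structural claim} (a finite-index normal subgroup surjecting onto $\ZZ$) and carry \emph{that} through the induction, applying the construction of \cite[Thm~3.5]{GL-Cayley} only once at the end; the paper instead inducts on the statement ``admits a harmonic \sghf'', which forces it, in the finite-quotient subcase, to extract a homomorphism $\sH\to\ZZ$ back out of a \sghf\ via Lemma~\ref{lem:GHF} before it can invoke \cite[Thm~3.5]{GL-Cayley}. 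Your packaging avoids that detour and is slightly cleaner, but the two arguments are otherwise the same.

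One small remark: your first-paragraph sketch of \cite[Thm~3.5]{GL-Cayley} asserts that harmonicity plus non-triviality of $\phi$ automatically yields the up/down-neighbour condition~(c) of Definition~\ref{def:height}. This is true, but it is not entirely immediate from harmonicity alone at a single vertex; the published proof uses a short additional argument. Since you are in any case citing the template from \cite{GL-Cayley}, this is not a gap.
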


We prove a slightly stronger version of this at Theorem \ref{st},
using transfinite induction.
The class $\EGF$ includes all virtually solvable groups, and thus Theorem \ref{eag}
extends \cite[Thm 5.1]{GL-Cayley}.
Since any finitely generated group with polynomial growth is virtually nilpotent, \cite{MG81},
and hence lies in $\EGF$, its locally finite Cayley graphs admit harmonic \ghf s.

\section{The Grigorchuk group}\label{sec:grig}

The (first) Grigorchuk group is an infinite, finitely generated, amenable group that is not elementary amenable.
We show in Theorem \ref{grig} that there exists a 
locally finite Cayley graph of the Grigorchuk group with \emph{no} \ghf.
This answers in the negative Question 3.3 of \cite{GL-loc} (see also \cite[Sect.\ 3]{GL-Cayley}).
 
Here is the definition of the group in question (see \cite{Grig80,Grig84,RG05}).
Let $T$ be the rooted binary tree with root vertex $\es$. 
The vertex-set of $T$ can be identified with the set of finite strings $u$ having entries $0$, $1$, 
where the empty string corresponds to the root $\es$.
Let $T_u$ be the subtree of all vertices with root labelled $u$. 

Let $\Aut(T)$ be the automorphism group of $T$, and
let $a\in\Aut(T)$ be the automorphism
that, for each string $u$, interchanges the two vertices $0u$ and $1u$. 

Any $\g\in\Aut(G)$ may be applied
in a natural way to either subtree $T_i$, $i=0,1$. 
Given two elements 
$\g_0,\g_1\in\Aut(T)$, we define $\g=(\g_0,\g_1)$ to be the automorphism
on $T$ obtained by applying $\g_0$ to $T_0$ and $\g_1$ to $T_1$.
Define automorphisms $b$, $c$, $d$ of $T$ recursively as follows:
\begin{equation}\label{eq:grigrels}
b=(a,c),\quad c=(a,d),\quad d=(e,b),
\end{equation}
where $e$ is the identity automorphism. 
The Grigorchuk group is defined as the subgroup of  $\Aut(T)$ generated by the 
set $\{a,b,c\}$.

\begin{theorem}\label{grig}
The Cayley graph $G=(V,E)$ of the Grigorchuk group with generator set $\{a,b,c\}$ satisfies:
\begin{letlist}
\item $G$  admits no \ghf,
\item for $\sH\normal\Aut(G)$ with finite index, 
any \hdi\  function on $V$ is constant on each orbit of $\sH$.
\end{letlist}
\end{theorem}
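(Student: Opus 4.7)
I would focus on (b) first, as the Grigorchuk-specific content lives there, and then deduce (a). For (b), fix an \hdi\ function $F:V\to\RR$ with $\sH\normal\Aut(G)$ of finite index. The hypothesis says precisely that $\chi(\g):=F(\g v)-F(v)$ is independent of $v\in V$, and it is then immediate that $\chi:\sH\to\RR$ is a group homomorphism. The desired conclusion, that $F$ is constant on each $\sH$-orbit, is equivalent to $\chi\equiv 0$, so the task is to rule out any non-zero homomorphism from $\sH$ into $(\RR,+)$.

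Identify $\Ga$ with its image in $\Aut(G)$ under left translation and set $N:=\Ga\cap\sH$. The natural map of $\Ga$-sets $\Ga/N\hookrightarrow\Aut(G)/\sH$ is injective, so $[\Ga:N]\le[\Aut(G):\sH]<\infty$. The crucial Grigorchuk-specific input enters here: the first Grigorchuk group is a torsion group, in fact a $2$-group, and hence so is the finite-index subgroup $N$. Any homomorphism from a torsion group into the torsion-free abelian group $(\RR,+)$ is trivial, so $\chi|_N\equiv 0$; equivalently, $F(nv)=F(v)$ for all $n\in N$ and $v\in V=\Ga$, so $F$ is constant on each set $Nv\subseteq\Ga$. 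Since there are only $[\Ga:N]<\infty$ such sets, $F$ takes only finitely many values on $V$. Consequently $\chi(\g)=F(\g\id)-F(\id)$ lies in a finite subset of $\RR$ for every $\g\in\sH$, so the subgroup $\chi(\sH)\le(\RR,+)$ is finite. The only finite subgroup of $(\RR,+)$ is $\{0\}$, so $\chi\equiv 0$ and (b) follows.

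For (a), I would argue by contradiction from a supposed \ghf\ $(h,\sH_0)$. By Remark \ref{rem:Poin} we may assume $\sH_0\normal\Aut(G)$, and a reduction via the core of an appropriate finite-index subgroup allows us to further assume $\sH_0$ has finite index in $\Aut(G)$. Part (b) then forces $h$ to be constant on each $\sH_0$-orbit, so $h$ takes only finitely many values overall; this contradicts condition (c) of Definition~\ref{def:height}, which forces an increasing sequence of $h$-values along a ray and hence $h$ unbounded above.

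\textbf{Main obstacle.} The essential mathematical content is the torsion property of $\Ga$ invoked in proving (b); the rest is bookkeeping from finite indices. The main technical delicacy is the reduction in (a) from a merely quasi-transitive subgroup $\sH_0$ to a finite-index normal subgroup of $\Aut(G)$, which requires either direct control of $\Aut(G)$ for the Grigorchuk Cayley graph or a more careful repetition of the argument for (b) that accommodates quasi-transitive rather than finite-index subgroups.
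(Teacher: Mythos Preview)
Your argument for (b) is correct and is actually more self-contained than the paper's: the paper deduces (b) from (a) by invoking an external construction (\cite[Thm~3.5]{GL-Cayley}) that builds a \ghf\ out of a non-constant \hdi\ function, whereas you extract the contradiction directly from the torsion property via the homomorphism $\chi$ and the finite-image trick. This is a genuine simplification for (b) taken in isolation.

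The obstacle you flag in (a), however, is real and cannot be finessed away. Quasi-transitivity of $\sH_0\le\Aut(G)$ does \emph{not} in general imply finite index in $\Aut(G)$ (think of a finitely generated group acting on its Cayley tree, where the full automorphism group is uncountable), and Remark~\ref{rem:Poin} only lets you pass to a normal core \emph{after} finite index is known---it does not produce it. What is needed is that $\Stab_\id$ be finite: if $|\Stab_\id|<\infty$ and $\sH_0$ has $k$ orbits on $V\cong\Aut(G)/\Stab_\id$, then each double coset $\sH_0\,g\,\Stab_\id$ is a union of at most $|\Stab_\id|$ right $\sH_0$-cosets, whence $[\Aut(G):\sH_0]\le k\,|\Stab_\id|<\infty$. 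Without this, nothing rules out $\Aut(G)$ containing non-torsion elements, and your homomorphism argument stalls.

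This is exactly where the paper's effort goes: the bulk of the proof of (a) is a graph-theoretic computation showing $\Stab_\id=\{e\}$ (equivalently $\Aut(G)=\Ga$). One first shows, from the local $4$-cycle structure, that any $\eta\in\Stab_\id$ preserves $a$-type edges; then one argues that $\eta$ cannot swap $b$-type and $c$-type edges, since that would force an identity of the form $ba\prod_{i=2}^8(x_ia)=\id$ with $x_i\in\{b,c\}$, and a short calculation with the recursive description \eqref{eq:grigrels} (equivalently, the word-problem algorithm) rules this out. Once $\Aut(G)=\Ga$ is established, quasi-transitive subgroups are exactly finite-index subgroups of $\Ga$, and either your part (b) or the paper's Corollary~\ref{cor}(b) finishes (a).
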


The proof of Theorem \ref{grig} is given in Section \ref{sec:pfgrig}.
In the preceding Section \ref{sec:noht}, two approaches are developed for
showing the absence of a \ghf\ within particular classes of Cayley graph. 
In the case of the Grigorchuk group, two reasons combine to forbid \ghf s,
namely, its Cayley group has no automorphisms beyond the action of the group itself, and 
the group is a torsion group in that every element  has finite order.

Since the Grigorchuk group is amenable,  Theorems \ref{eag} and \ref{grig} yield that:
within the class of infinite, finitely generated groups, every elementary amenable group has a \ghf, but there
exists an amenable group without a \ghf.  The Grigorchuk group is finitely \emph{generated} but not finitely 
\emph{presented}, \cite[Thm 6.2]{Grig84}.

We ask if there exists an infinite, finitely \emph{presented}, amenable group 
with a Cayley graph having no \ghf.
A natural candidate might be the group $\Ga=\langle S\mid R\rangle$ of \cite[Thm 1]{Grig98},
with
\begin{gather*}
S=\{a,c,d,t\},\\
R=\bigl\{a^2=c^2=d^2=(ad)^4=(adacac)^4=\id,
t^{-1}at=aca, t^{-1}ct=dc, t^{-1}dt=c\bigr\}.
\end{gather*}
This finitely presented, amenable HNN-extension of the Grigorchuk group 
is not elementary amenable.
However, since it contains  the free group generated by the stable letter $t$, it possesses a
\GHF. More precisely, the function
$$
h(\id)=0,\q h(t)=1,\q h(t^{-1})=-1,\q \q h(s)=0 \text{ for } s\in S,\ s\ne t^{\pm 1},
$$
defines  a \GHF.

\part{Results for non-amenable graphs}

\section{Connective constants of non-amenable graphs}\label{sec:nonamen}

Let $G\in\sG$ have degree $\De$.
It was proved in \cite[Thm 4.1]{GL-Comb} that
$$
\sqrt{\De-1} \le \mu(G) \le \De -1.
$$
The upper bound is achieved by the $\De$-regular tree $T_\De$. It is unknown
if the lower bound is sharp for \emph{simple} graphs. This lower bound may however be improved
for non-amenable graphs, as follows.

Let $P$ be the transition matrix of simple random walk (SRW) on $G=(V,E)$, and let $I$ be the 
identity matrix. The \emph{spectral bottom} of $I-P$ is defined to be the largest  
$\lambda$ with the property that, for all $f\in\ell^2(V)$, 
\begin{equation}\label{eq:spbo}
\langle f, (I-P)f \rangle\geq \lambda\langle f,f\rangle.
\end{equation}
It may be seen that $\lambda(G)=1-\rho(G)$ where $\rho(G)$ is
the spectral radius of $P$ (see \cite[Sect.\ 6]{LyP}, and \cite{Wo} for
an account of the spectral radius).

\begin{remark}\label{rem0}
It is known that $G$ is a non-amenable if and only if $\rho(G)<1$,
which is equivalent to $\lambda(G)>0$. This was proved
by Kesten \cite{K59b, K59a} for Cayley graphs of finitely-presented groups, and 
extended to general transitive graphs by  Dodziuk \cite{Dod}
(see also the references in \cite[Sect.\ 6.10]{LyP}).
\end{remark}

\begin{theorem}\label{thm:newin}
Let $G\in \sG$ have degree $\De \ge 3$. 
Let $P$ be the transition matrix of SRW on $G$, and $\lambda$ 
the spectral bottom of $P$. The connective constant $\mu(G)$ satisfies 
\begin{equation}\label{eq:newin}
\mu(G)\geq (\De-1)^{\frac12(1+c\lambda)},
\end{equation}
where $c=\De(\De-1)/(\De-2)^2$.
\end{theorem}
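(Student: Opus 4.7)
The plan is to refine the lower bound $\mu(G) \geq \sqrt{\Delta-1}$ of \cite{GL-Comb} by extracting an additional multiplicative factor from the positivity of the spectral gap $\lambda$. On a $\Delta$-regular transitive graph there are exactly $\Delta(\Delta-1)^{n-1}$ non-backtracking walks of length $n$ from the origin; every SAW is non-backtracking, so the task is to bound the deficit consisting of those non-backtracking walks that revisit some vertex. Each such walk necessarily contains a non-backtracking closed sub-walk, so the combinatorial problem reduces to estimating the number $\tau_k$ of closed non-backtracking walks of length $k$ at a fixed vertex.

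The key input is a spectral estimate for $\tau_k$. Here I would use the identity \eqref{eq:spbo}, equivalently the fact that the SRW transition operator $P$ has spectral radius $\rho = 1 - \lambda < 1$ (Remark \ref{rem0}), together with the bound
\begin{equation*}
\langle \delta_{\mathrm{o}}, P^{k} \delta_{\mathrm{o}}\rangle \;\leq\; (1-\lambda)^{k}.
\end{equation*}
Converting this estimate on closed walks of the ordinary random walk into a bound on $\tau_k$ uses the standard algebraic relationship between the non-backtracking matrix and the adjacency/transition matrix on a $\Delta$-regular graph, of Ihara--Bass type. The arithmetic of $\Delta$-regularity is where the explicit constant $c = \Delta(\Delta-1)/(\Delta-2)^2$ should emerge, since this is essentially the ratio that controls the eigenvalue map between non-backtracking and ordinary walks and appears naturally in the change of variables relating the two spectra.

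Finally, I would combine this closed-walk bound with a first-repetition / concatenation argument to bound the deficit: decomposing any non-self-avoiding non-backtracking walk at its first vertex repetition expresses the deficit at length $n$ as a convolution of SAW counts at smaller length with $\tau_k$-type quantities. Substituting the spectral estimate and re-arranging produces $\sigma_n \geq (\Delta-1)^{n/2}(\Delta-1)^{c\lambda n / 2}$ up to subexponential factors, whence \eqref{eq:newin} follows by taking $n$-th roots. The main obstacle is this spectral conversion step: translating the spectral bottom of the \emph{ordinary} random walk into exponential decay of \emph{non-backtracking} closed walks while retaining the clean constant $c$, since a crude comparison between the two walk types loses factors of $\Delta$ per step that would overwhelm the $c\lambda$ gain; keeping the loss to the precise factor $(\Delta-1)/(\Delta-2)^2$ is the technical core of the argument.
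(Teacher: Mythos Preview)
Your approach is fundamentally different from the paper's, and as written it has a genuine gap.

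The subtraction/convolution scheme you describe cannot yield the desired lower bound. On a generic non-amenable transitive graph (say one of small girth) almost all non-backtracking walks of length $n$ fail to be self-avoiding, so the ``deficit'' $\Delta(\Delta-1)^{n-1}-\sigma_n$ is of the same exponential order as the NBW count itself; subtracting an upper bound on this deficit from $(\Delta-1)^n$ gives nothing. Moreover, the first-repetition decomposition you invoke does not close: decomposing a non-SAW non-backtracking walk at its first self-intersection produces a SAW segment, then a loop segment, then an arbitrary tail, so the resulting identity expresses $N_n-\sigma_n$ in terms of quantities that again involve $\sigma_j$, and you have no a priori lower bound on those to bootstrap from. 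Finally, your expectation that $c=\Delta(\Delta-1)/(\Delta-2)^2$ ``appears naturally'' in the Ihara--Bass change of variables is unfounded; this constant has nothing to do with the NBW/SRW spectral map, and indeed the base inequality $\mu\ge\sqrt{\Delta-1}$ from \cite{GL-Comb} that you want to refine is itself \emph{not} proved by NBW counting.

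The paper's argument works directly with extendable SAWs rather than by subtraction. For each extendable $2n$-step SAW $\pi$, the edges leaving $\pi$ are coloured \emph{blue} if the corresponding one-step extension is still extendable, and \emph{red} otherwise; the earlier argument of \cite{GL-Comb} gave an injection from red to blue edges, whence $B_\pi\ge n(\Delta-2)$ and then $\mu\ge\sqrt{\Delta-1}$ via a tree-counting lemma. The refinement here is to observe that this injection lands only on \emph{finite} blue edges (those whose far endpoint lies in a finite component of $G\setminus\pi$), so in fact $B_\pi\ge R_\pi + B_\pi^\infty$, where $B_\pi^\infty$ counts \emph{infinite} blue edges. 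The spectral bottom enters through Lemma~2.1 of Benjamini--Nachmias--Peres \cite{bnp}, applied with $A=\pi$: it bounds $\lambda$ above by the average over vertices of $\pi$ of the SRW escape probability from $\pi$. Since only infinite blue edges permit escape, and each escape probability is at most the tree value $(\Delta-2)/(\Delta(\Delta-1))$, one obtains $B_\pi^\infty\ge 2n\lambda\cdot\Delta(\Delta-1)/(\Delta-2) - O(1)$. Combining gives $B_\pi\ge n(\Delta-2)(1+c\lambda)-O(1)$, and the tree-counting lemma of \cite{GL-Comb} converts this branch count into the bound $\mu\ge(\Delta-1)^{(1+c\lambda)/2}$. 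The constant $c$ thus records the product of the reciprocal tree escape probability with the factor $1/(\Delta-2)$ coming from $B_\pi+R_\pi=2n(\Delta-2)$, not any spectral change of variables.
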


The improvement in the lower bound for $\mu(G)$ is strict if and only if $\lambda>0$, 
which is to say that $G$ is non-amenable. It is standard (see \cite[Thm 6.7]{LyP}) that
\begin{equation}\label{eq:lambdaineq}
\tfrac 12\phi^2 \le 1-\sqrt{1-\phi^2} \le \lambda \le \phi,
\end{equation}
where $\phi=\phi(G)$ is the edge-isoperimetric constant of \eqref{eq:iso}.
By \cite[Thm 3]{AGV},
\begin{equation}\label{eq:univ}
\lambda(G) \le \lambda(T_\De)-\frac{\De-2}{\De(\De-1)^{g+2}},
\end{equation}
where $g$ is the girth of $G$, $T_\De$ is the $\De$-regular tree, and  
\begin{equation}\label{eq:tree}
\lambda(T_\De)=1-\frac{2\sqrt{\De-1}}\De.
\end{equation}

\begin{remark}\label{rem1}
The spectral bottom (and therefore the spectral radius, also) is
not a continuous function of $G$ in the usual graph metric
(see \cite[Sect.\ 5]{GL-loc}). This
follows from \cite[Thm 2.4]{NS}, where it is proved that, for all pairs $(k,l)$ with $k \ge 2$ and $l \ge 3$,
there exists a group with polynomial growth whose Cayley
graph $G_{k,l}$ is $2k$-regular with girth exceeding $l$. Since $G_{k,l}$
is amenable, we have $\lambda(G_{k,l})=0$, whereas $\lambda(T_{2k})$ is given by \eqref{eq:tree}.
\end{remark}

\begin{proof}[Proof of Theorem \ref{thm:newin}]

This is achieved by a refinement of the argument used to prove \cite[Thm 4.1]{GL-Comb},
and we shall make use of the notation introduced in that proof.

Let $v_0=\id$, and let $\pi=(v_0,v_1,\dots,v_{2n})$ be an extendable $2n$-step SAW of $G$.
For convenience, we augment $\pi$ with a mid-edge incident to $v_0$ and not lying on
the edge $\langle v_0,v_1\rangle$. 
Let $E_{\pi}$ be the set of oriented edges $[v,w\rangle$ such that: (i) $v\in \pi$,  $v \ne v_{2n}$,
and (ii) the (non-oriented) edge $\langle v,w\rangle$ does not lie in $\pi$. 
Note that 
\begin{equation}\label{l0}
|E_{\pi}|=2n(\De-2).
\end{equation}

Each (oriented) edge in $E_{\pi}$ is coloured either red or blue according to the following rule. 
For $v\in \pi$, let $\pi_v$ be the sub-path of $\pi$ joining $v_0$ to $v$. 
The edge $[v,w\rangle\in E_{\pi}$ is coloured red if $\pi_v\cup[v, w\rangle$ 
is not an extendable SAW, and is coloured blue otherwise. By \eqref{l0}, 
the number $B_\pi$ (\resp, $R_\pi$) of blue edges
(\resp, red edges) satisfies
\begin{equation}\label{l-1}
B_\pi+R_\pi = 2n(\De-2).
\end{equation}
We shall make use of the following lemma.

\begin{lemma}\label{lem1}
The number $B_\pi$ satisfies
\begin{equation}\label{l4}
B_\pi \ge \frac {n(1+c\lambda)}{\De-2}  -\frac {\De-1} 2,
\end{equation}
where $c=\De(\De-1)/(\De-2)^2$.
\end{lemma}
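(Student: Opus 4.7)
The plan is to adapt the red/blue edge-counting argument of \cite[Thm 4.1]{GL-Comb} and feed into it the spectral inequality \eqref{eq:spbo}, in order to produce the quantitative improvement of order $\lambda$ beyond the baseline $B_\pi\ge n(\De-2)$ given by the purely combinatorial argument. First I would recall the combinatorial core of \cite{GL-Comb}: to each red oriented edge $[v_s,w\rangle\in E_\pi$ one associates a blue blocking edge in a set $F_s$, and extendability of two candidate escape walks at $v_s$ and $v_t$ forces $F_s\cap F_t=\varnothing$ for $s\ne t$. This disjointness gives $B_\pi\ge R_\pi$, which when combined with \eqref{l-1} recovers the amenable-case baseline $B_\pi\ge n(\De-2)$, i.e.\ the statement of Lemma \ref{lem1} with $\lambda$ replaced by $0$.

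Second, I would inject the spectral input. Applying \eqref{eq:spbo} with the test function $f=\id_{V(\pi)}$ on the vertex set $V(\pi)=\{v_0,\dots,v_{2n}\}$, one computes $\langle f,f\rangle=2n+1$ and $\langle f,(I-P)f\rangle=|\pde V(\pi)|/\De$, so that
\[
|\pde V(\pi)|\;\ge\;\De\lambda(2n+1).
\]
On the other hand a direct count gives $|\pde V(\pi)|=2n(\De-2)+\De-2k$, where $k$ is the number of chords of $\pi$; every boundary edge of $V(\pi)$ is itself a blue or red edge of $E_\pi$ whose second endpoint lies outside $V(\pi)$, together with an extra boundary contribution at the unused endpoint $v_{2n}$. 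Combining this spectral lower bound with the disjointness inequality $B_\pi\ge R_\pi$ from the first step, with coefficients chosen to absorb the chord term $2k$, one obtains an inequality of the form $B_\pi-R_\pi\ge c'\lambda n-O(1)$; rearranging via \eqref{l-1} then gives the statement of the lemma, with the specific constant $c=\De(\De-1)/(\De-2)^2$ emerging from the two normalizations (the factor $\De$ in the transition matrix $P$ and the factor $\De-2$ per internal vertex of $\pi$), and with the additive $-(\De-1)/2$ absorbing the boundary contribution at $v_{2n}$ that is not included in $E_\pi$.

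The main obstacle I anticipate is the precise book-keeping with chords and with the choice of test function. A bare indicator of $V(\pi)$ mixes blue and red boundary edges symmetrically and so loses the asymmetry exploited in the combinatorial step; a more careful choice, for instance a weighted function supported on $V(\pi)$ together with its first neighbour shell, or an averaged version of \eqref{eq:spbo} taken over translates of $\pi$ by elements of $\Aut(G)$, will likely be needed to extract the precise constant $c$. One could also go via the isoperimetric form \eqref{eq:lambdaineq}, at the cost of replacing $\lambda$ by $\tfrac12\phi^2$, and then recover the sharper $\lambda$-dependence by an eigenfunction argument on the induced subgraph on $V(\pi)$. The overall shape of the argument is nevertheless robust: non-amenability forces a quantitative asymmetry between blue and red edges along every extendable SAW, and this asymmetry is exactly what lifts the exponent of $\De-1$ in Theorem \ref{thm:newin} above $\tfrac12$.
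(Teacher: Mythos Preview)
Your proposal correctly identifies the two ingredients—the combinatorial injection giving $B_\pi\ge R_\pi$, and a spectral input—but it does not close the gap you yourself flag, and the suggested remedies do not lead there. Testing \eqref{eq:spbo} with $f=\id_{V(\pi)}$ yields only $|\pde V(\pi)|\ge \De\lambda(2n+1)$, a lower bound on the total number of edges leaving $V(\pi)$. Such edges may be red just as well as blue (a red edge can lead to a finite pocket of $G\setminus\pi$ lying entirely outside $V(\pi)$), so this inequality carries no asymmetry between $B_\pi$ and $R_\pi$ beyond what \eqref{l-1} already records. No reweighting of $f$ on $V(\pi)$ or on its neighbour shell can repair this, because the Rayleigh quotient is blind to whether a boundary edge leads to a finite or to an infinite component of $G\setminus\pi$; and that distinction is exactly what is needed.

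The paper's argument hinges on this finite/infinite dichotomy. Call $[v,w\rangle\in E_\pi$ \emph{infinite} if $w$ lies in an infinite component of $G\setminus\pi$. Two refinements are made. First, the injection of \cite{GL-Comb} from red to blue edges in fact lands in the \emph{finite} blue edges (the image edge lies in the same component of $G\setminus\pi_v$ as the red edge, and that component is finite), so one obtains the sharper inequality $B_\pi\ge R_\pi+B_\pi^\infty$, where $B_\pi^\infty$ is the number of infinite blue edges. Second, in place of the bare Rayleigh quotient the paper invokes \cite[Lemma~2.1]{bnp}, which bounds $\lambda$ by the average over $v\in\pi$ of the escape probabilities $\beta_{[v,w\rangle}=\PP_v(X_1=w,\ X_m\notin\pi\text{ for all }m>0)$. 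These vanish on finite edges, so only infinite (hence blue) edges contribute; each nonzero $\beta$ is then bounded by its value on the $\De$-regular tree, namely $(\De-2)/\bigl(\De(\De-1)\bigr)$, and it is this tree comparison—not a normalization book-keeping—that produces the constant $c=\De(\De-1)/(\De-2)^2$. One obtains $B_\pi^\infty\ge 2n\lambda\,\De(\De-1)/(\De-2)-(\De-1)$, and the lemma follows from $B_\pi\ge R_\pi+B_\pi^\infty$ together with \eqref{l-1}.
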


We now argue as in \cite[Lemma 5.1]{GL-Comb} to deduce from Lemma \ref{lem1} 
that the number of extendable $2n$-step
SAWs from $v_0$ is at least $C(\De-1)^{n(1+c\lambda)}$ where $C=C(\De)$.  Inequality \eqref{eq:newin} follows as required.
\end{proof}

\begin{proof}[Proof of Lemma \ref{lem1}]
An edge $[v,w\rangle \in E_\pi$ is said to be \emph{finite} if $w$ lies in a finite
component of $G \setminus \pi$, and \emph{infinite} otherwise.
If $[v,w\rangle\in E_{\pi}$ is red, then $w$ is necessarily finite. 
Blue edges, on the other hand, may be either finite or infinite.

It was explained in the proof of \cite[Thm 4.1]{GL-Comb} that there exists an injection 
$f$ from the set of red edges to
the set of blue edges with the property that, if $e=[v,w\rangle$ is red, and 
$f(e)=[v',w'\rangle$, then $w$ and $w'$ lie in the same component of
$G\setminus \pi_v$. Since $e$ is finite, so is $f(e)$. It follows that
\begin{equation}\label{eq:inf}
B_\pi \ge R_\pi+B^\oo_\pi,
\end{equation}
where $R_\pi$ is the number of red edges, and $B^\oo_\pi$ is the number of infinite blue edges. 

Let $X=(X_m: m=0,1,2,\dots)$ be a SRW on $G$, and let $\PP_v$ denote 
the law of $X$ started at $v\in V$.
For $[ v,w \rangle\in E_{\pi}$, let 
\begin{equation*}
\beta_{[v,w\rangle}=\PP_v\bigl(X_1=w,\text{ and } \forall m>0, X_m\notin \pi\bigr).
\end{equation*}
By \cite[Lemma 2.1]{bnp} with $A=\pi$,
\begin{equation}\label{l1}
\lambda \le \frac{1}{2n+1}\left(\sum_{[v,w\rangle\in E_{\pi}}\beta_{[v,w\rangle} + 
\sum_w \beta_{[v_{2n},w\rangle}\right).
\end{equation}
If $[v,w\rangle\in E_\pi$ is finite, then $\beta_{[v,w\rangle}=0$. By \eqref{l1},
\begin{equation}\label{l20}
\lambda \le \left(\frac{ B_\pi^\oo+\De-1}{2n}\right)
\PP_v\bigl(X_1=w,\text{ and } \forall m>0, X_m\ne v\bigr).
\end{equation}
The last probability depends on the graph $G$, 
and it is a maximum when $G$ is the $\De$-regular tree $T_\De$ (since $T_\De$ is
the universal cover of $G$).
Therefore, it is no greater than $1/\De$ multiplied by the probability that a random walk on $\ZZ$,
which moves rightwards with probability $p=(\De-1)/\De$ and leftwards
with probability $q=1-p$, never visits $0$ having started at $1$.
By, for example, \cite[Example 12.59]{GW},
$$
\PP_v\bigl(X_1=w,\text{ and } \forall m>0, X_m\ne v\bigr) \le \frac1\De \left(1-\frac qp\right)
=\frac{\De-2}{\De(\De-1)}.
$$
By \eqref{eq:inf} and \eqref{l20},
\begin{equation}\label{l21}
B_\pi \ge  R_\pi + 2n\lambda \frac{\De(\De-1)}{\De-2}-\De+1,
\end{equation}
and \eqref{l4} follows by \eqref{l-1}.
\end{proof}

\section{Graphs with large girth}\label{sec:girth}

Benjamini, Nachmias, and Peres showed in \cite[Thm 1.1]{bnp} that the critical probability $\pc(G)$
of bond percolation on a $\De$-regular, non-amenable graph $G$ with large girth is close to that of the critical
probability of the $\De$-regular tree $T_\De$. Their main result implies the following. 

\begin{theorem}\label{thm:pc}
Let $G\in\sG$ be non-amenable with degree $\De\ge 3$ and girth $g\le\oo$. 
There exists an absolute positive constant $C$ such that
\begin{equation}\label{eq:ineq2}
\left[\frac1{\De-1}+C\frac{\log(1+\lambda^{-2})}{g\De}\right]^{-1}
\le \mu(G) \le \De-1,
\end{equation}
where $\lambda=\lambda(G)$ is the spectral bottom of SRW on $G$, as in \eqref{eq:spbo}.
Equality holds in the upper bound of \eqref{eq:ineq2} if and only if $G$ is a tree, that is, $g=\oo$.
\end{theorem}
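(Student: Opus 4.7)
My plan is to establish the two bounds of \eqref{eq:ineq2} by separate arguments, with the lower bound resting on the percolation estimate of Benjamini, Nachmias, and Peres.

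For the upper bound, I observe that an $n$-step SAW from $\id$ has $\De$ choices for its first edge and at most $\De-1$ non-backtracking choices at each subsequent step, giving $\si_n\le\De(\De-1)^{n-1}$ and hence $\mu(G)\le\De-1$. If $G=T_\De$, this bound is sharp at every step, so $\si_n=\De(\De-1)^{n-1}$ and $\mu(G)=\De-1$. For the converse direction, I appeal to the classical fact that on any $\De$-regular transitive graph the presence of a cycle forces $\mu(G)<\De-1$ strictly; this can be cited from the SAW literature or established by a fork/cogrowth argument quantifying the deficit introduced by the existence of a length-$g$ circuit.

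For the lower bound, the key input is the percolation inequality \cite[Thm 1.1]{bnp}: for a non-amenable, $\De$-regular, transitive graph $G$ of girth $g$ and spectral bottom $\lambda$,
\[
\pc(G)\le\frac{1}{\De-1}+C\,\frac{\log(1+\lambda^{-2})}{g\De},
\]
where $\pc(G)$ denotes the critical probability for Bernoulli bond percolation and $C>0$ is an absolute constant. I combine this with the standard comparison $\mu(G)\ge 1/\pc(G)$, whose proof is a first-moment computation: for any $p<1/\mu(G)$ the expected number of open $n$-step SAWs from $\id$ equals $\si_n p^n$ and decays exponentially, whence the open cluster of $\id$ is almost surely finite and $p\le\pc(G)$. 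Taking reciprocals in the BNP bound then yields the desired lower bound on $\mu(G)$.

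I expect the main obstacle to be the strictness in the ``only if'' direction of the upper-bound equality case, since \eqref{eq:ineq2} does not, on its own, preclude $\mu(G)=\De-1$ when $g<\oo$. The other steps amount to bookkeeping around the two cited inputs: the BNP percolation bound on the one hand, and the elementary first-moment inequality between $\mu$ and $\pc$ on the other.
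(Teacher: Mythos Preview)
Your proposal is correct and matches the paper's own proof: the lower bound is obtained exactly as you describe, by combining \cite[Thm 1.1]{bnp} with the inequality $\mu(G)\ge 1/\pc(G)$, and the upper bound together with the strict equality case is cited from \cite[Thm 4.2]{GL-Comb} rather than reproved. Your identification of the strictness of $\mu(G)<\De-1$ for $g<\oo$ as the only nontrivial point is apt, and the paper handles it by citation just as you suggest.
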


\begin{proof}
The upper bound of \eqref{eq:ineq2} is from \cite[Thm 4.2]{GL-Comb}. The lower
bound is an immediate consequence of \cite[Thm 1.1]{bnp} and the fact that
$\mu(G) \ge 1/\pc(G)$  (see, for example, \cite[Thm 7]{BH} and \cite[eqn (1.13)]{G99}, which hold
for general quasi-transitive graphs). 
\end{proof}

We recall from Remark \ref{rem0} that $\lambda>0$ if and only if $G$ is non-amenable.
Theorem \ref{thm:pc} does not, of itself, imply that $\mu(\cdot)$ is
continuous at $T_\De$, since $\lambda(\cdot)$ is not continuous at $T_\De$ (in the case
when $\De$ is even, see Remark \ref{rem1}).
For continuity at $T_\De$, it would suffice that $\lambda(\cdot)$ is bounded
away from $0$ on a neighbourhood of $T_\De$. By \eqref{eq:lambdaineq},
this is valid within any class of graphs whose edge-isoperimetric constants \eqref{eq:iso}
are bounded uniformly from $0$. See also \cite[Thm 5.1]{GL-loc}.

\section{The Higman group}\label{sec:hig}

The \emph{Higman group}  $\Ga$  of \cite{Hg} is the infinite, finitely presented 
group with presentation $\Ga=\langle S \mid R\rangle$ where
\begin{equation}\label{eq:higpres}
\begin{aligned}
S&=\{ a,b,c,d,a^{-1},b^{-1},c^{-1},d^{-1}\},\\ 
R&= \{a^{-1}ba=b^{2},b^{-1}cb=c^{2},c^{-1}dc=d^{2}, d^{-1}ad= a^{2}\}.
\end{aligned}
\end{equation}
This group is interesting since it has no proper normal subgroup with finite index, and the
quotient of $\Ga$ by its maximal proper normal 
subgroup is an infinite, finitely generated, simple group. 
By \cite[Thm 4.1(b)]{GL-Cayley}, $\Ga$ has no \GHF.
The above two reasons conspire to forbid \ghf s.

\begin{theorem}\label{thm:hg1}
The Cayley graph $G=(V,E)$ of the Higman group $\Ga=\langle S\mid R\rangle$ has no \ghf.
\end{theorem}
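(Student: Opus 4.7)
The approach is to reduce the existence of a \ghf\ to the existence of a \GHF, thereby chaining together the two features of the Higman group $\Ga$ recorded in the passage above the statement: (i) $\Ga$ has no proper normal subgroup of finite index, and (ii) $\Ga$ admits no \GHF\ by \cite[Thm 4.1(b)]{GL-Cayley}. Suppose, for a contradiction, that $(h,\sH)$ is a \ghf\ on $G$ in the sense of Definition \ref{def:height}.

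First I would invoke the framework of Section \ref{sec:noht}, together with Poincar\'e's theorem for subgroups (cf.\ Remark \ref{rem:Poin}), to replace $\sH$ by a subgroup of $\Aut(G)$ that is normal and of finite index, without destroying any of the axioms of a \ghf. Identifying $\Ga$ with its image in $\Aut(G)$ under left multiplication, set $K:=\sH\cap\Ga$. Normality of $\sH$ in $\Aut(G)$ gives $K\normal\Ga$, and finite index of $\sH$ in $\Aut(G)$ gives $[\Ga:K]<\oo$. Property (i) then forces $K=\Ga$, so that $\Ga\le\sH$.

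Consequently $h$ is $\Ga$-difference-invariant in the sense of \eqref{eq:hdi2}. Setting $w=\id$ and using $h(\id)=0$ yields
\begin{equation*}
h(\gamma v)=h(\gamma)+h(v),\qquad \gamma,v\in\Ga.
\end{equation*}
Iterating this across the letters of any word $s_1s_2\cdots s_n$ in $S$ gives the additive property of Definition \ref{def:GHF}(b); condition (c) of that definition follows by applying the identity to an arbitrary representation $s_1\cdots s_n=\id$; and the non-triviality in (a) is forced by Definition \ref{def:height}(c), which produces some $s\in S$ with $h(s)\ne 0$. Thus $h$ is a non-trivial \GHF\ of $\Ga$, contradicting (ii).

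The principal obstacle I expect lies in the opening reduction. Quasi-transitivity of $\sH$ does not by itself imply finite index in $\Aut(G)$: vertex-stabilizers in $\Aut(G)$ for a transitive graph can be infinite, as happens for the regular tree $T_\De$. Verifying that the passage to a finite-index normal subgroup can be carried out in the Cayley graph setting of the Higman group is exactly the service that Section \ref{sec:noht} must supply, after which property (i) makes $K$ collapse onto all of $\Ga$ and the remainder of the argument is routine.
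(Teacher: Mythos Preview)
Your high-level strategy matches the paper's exactly: use the absence of proper finite-index normal subgroups in $\Ga$ to force any \ghf\ to be a \GHF, then invoke \cite[Thm 4.1(b)]{GL-Cayley}. This is precisely Corollary~\ref{cor}(a). The algebra in your final two paragraphs (deriving a \GHF\ from $\Ga$-difference-invariance) is correct.

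The gap is where you yourself locate it, but it is more serious than your phrasing suggests. Section~\ref{sec:noht} does \emph{not} supply the passage from quasi-transitivity of $\sH$ to finite index in $\Aut(G)$; it is a general framework whose conclusions (Proposition~\ref{thm:stab} and Corollary~\ref{cor}) are all conditioned on the hypothesis $\Stab_\id=\Pi$. Only when the vertex-stabiliser is finite does quasi-transitivity of $\sH$ yield $[\Aut(G):\sH]<\oo$, after which Poincar\'e's theorem and your intersection argument go through. Verifying $\Stab_\id=\Pi$ for the Higman Cayley graph is not automatic and is in fact the bulk of the paper's proof: one must show, by analysing $5$-cycles inside the four embedded Baumslag--Solitar Cayley graphs $\BS(s_i,s_{i+1})$, that every automorphism fixing $\id$ acts as a cyclic rotation $(abcd)^k$ of the generators. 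Without this graph-theoretic analysis your reduction does not start, and nothing in Section~\ref{sec:noht} does it for you.
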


A further group of Higman type is given as follows. 
Let $S$ be as above, and let $\Ga'=\langle S\mid R' \rangle$ be the finitely presented group with
\begin{equation*}
R'=\{a^{-1}ba=b^{2},b^{-2}cb^2=c^{2},c^{-3}dc^3=d^{2}, d^{-4}ad^4= a^{2}\}.
\end{equation*}
Note that $\Ga'$ is infinite and non-amenable, since the subgroup generated by 
the set $\{a,c,a^{-1},c^{-1}\}$  is a free group (as in the corresponding
step for the Higman group at \cite[pp.\ 62--63]{Hg}).

\begin{theorem}\label{thm:hg2}
The Cayley graph $G=(V,E)$ of the above group $\Ga'=\langle S\mid R'\rangle$ has no \ghf.
\end{theorem}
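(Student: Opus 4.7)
The strategy is the one used for Theorem \ref{thm:hg1} for the Higman group: I would establish that $\Ga'$ enjoys two properties --- (P1) trivial abelianization (equivalently, no GHF), and (P2) no proper normal subgroup of finite index --- and then invoke the machinery of Section \ref{sec:noht} to conclude that $G$ admits no \ghf.

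For (P1), I would compute the abelianization directly. Each of the four relations in $R'$ has the form $x^{-n}yx^{n}=y^{2}$ and therefore collapses in $(\Ga')^{\mathrm{ab}}$ to $y=2y$, forcing $a=b=c=d=0$. Hence $(\Ga')^{\mathrm{ab}}$ is trivial, the first Betti number of $\Ga'$ vanishes, and \cite[Thm~4.1(b)]{GL-Cayley} rules out any GHF. For (P2), I would adapt Higman's argument from \cite[pp.~62--63]{Hg}. Let $\phi\colon\Ga'\to Q$ be a surjection onto a finite group, and write $\alpha,\beta,\gamma,\delta$ for the images of $a,b,c,d$. Since each relation in $R'$ asserts that a generator is conjugate in $\Ga'$ to its own square, $\phi(y)$ and $\phi(y)^{2}$ are conjugate in $Q$ for every $y\in\{a,b,c,d\}$, so $\Od(\phi(y))$ is odd. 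Choosing integers $m_{i}$ with $2^{m_{i}}\equiv 1$ modulo the four relevant orders, iteration of the four relations yields commutation identities
$$
[\alpha^{m_{1}},\beta]=[\beta^{2m_{2}},\gamma]=[\gamma^{3m_{3}},\delta]=[\delta^{4m_{4}},\alpha]=\id
$$
in $Q$, and Higman's cyclic-descent argument then propagates these constraints around the four-relation cycle to force $\alpha=\beta=\gamma=\delta=\id$. Thus $Q$ is trivial, establishing (P2). With (P1) and (P2) in hand, the reduction of an arbitrary \ghf\ $(h,\sH)$ on $G$ to a contradiction proceeds exactly as for the Higman group: the $\Ga'$-action on $G$ by left multiplication together with (P2) forces $h$ to be difference-invariant under all of $\Ga'$ (invoking Remark \ref{rem:Poin} to sidestep the normality hypothesis), producing a GHF-type object which is ruled out by (P1).

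The main obstacle is Step 2, namely verifying that Higman's descent still closes with the modified conjugator exponents $n\in\{1,2,3,4\}$ of $R'$ in place of the uniform $n\equiv 1$ of the Higman group. The odd-order conclusion for each generator is immediate and robust, but tracking which power of each generator centralizes the next, carried around the full four-cycle, requires careful bookkeeping; the essential mechanism should nevertheless survive because the new exponents enter the commutator-power count only multiplicatively. Step 1 is a routine abelianization, and the reduction in Step 3 is formally identical to its counterpart for Theorem \ref{thm:hg1}.
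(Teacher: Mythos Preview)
Your proposal has a genuine gap: you have omitted the step corresponding to statement (ii) in the paper's proof, namely showing that $\Stab_\id=\{e\}$ (equivalently $\Aut(G)=\Ga'$). This is not a technicality that can be absorbed into Step~3. A \ghf\ $(h,\sH)$ has $\sH\le\Aut(G)$, not $\sH\le\Ga'$; your properties (P1) and (P2) are statements about $\Ga'$, and they say nothing about $\Aut(G)$ until you know the latter is essentially $\Ga'$. Concretely, to run the reduction ``$\sH\cap\Ga'\normal\Ga'$ has finite index, hence equals $\Ga'$ by (P2)'', you need $[\Aut(G):\sH]<\oo$, and Remark~\ref{rem:Poin} (Poincar\'e) does not supply this: quasi-transitivity of $\sH$ on $V$ implies finite index in $\Aut(G)$ only once one knows the vertex-stabilizers in $\Aut(G)$ are finite. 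That finiteness is exactly what the paper establishes in step~(ii), via a graph-theoretic argument using the distinct shortest-cycle signatures $(5,8),(5,7),(7,8),(9,11)$ attached to the four edge types --- and indeed the particular exponents $1,2,3,4$ in $R'$ were chosen precisely to make these signatures distinct and force $\Pi=\{e\}$. Without (ii), Corollary~\ref{cor}(a) is unavailable and the reduction collapses.

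A secondary point: your sketch of (P2) is not Higman's argument. The commutation identities $[\alpha^{m_1},\beta]=\id$ etc.\ that you derive are correct but do not by themselves force triviality (they hold in any abelian quotient, for instance). Higman's ``descent'' is about orders and divisibility: from $\ol a^{-n}\ol b\,\ol a^{n}=\ol b^{2^n}$ one gets $\Od(\ol b)\mid 2^{\Od(\ol a)}-1$, and cycling these four divisibilities one takes the least prime factor $p$ of the four orders and reaches a contradiction via the multiplicative order of $2$ modulo $p$. The exponents $1,2,3,4$ in $R'$ are harmless here (one simply takes $n=\Od(\ol b)$ in $\ol b^{-2n}\ol c\,\ol b^{2n}=\ol c^{2^n}$, etc.), but the mechanism is arithmetic, not the commutator bookkeeping you describe.
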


The proofs of the above theorems are given in Sections \ref{sec:pfhg} and \ref{sec:pfhig}, \resp.

\part{Remaining proofs}

\section{Proof of Theorem \ref{eag}}\label{sec:proof1}

We shall prove the following stronger form of Theorem \ref{eag}. 

\begin{theorem}\label{st}
Let $\Ga\in\EGF$. 
There exists a normal subgroup $\sH\normal\Ga$
with $1<[\Ga:\sH]<\oo$ such that any locally finite Cayley graph $G$ of 
$\Ga$ possesses a harmonic, \sghf\ of the form $(h,\sH)$.
\end{theorem}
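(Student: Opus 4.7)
The plan is to proceed by transfinite induction on the ordinal $\alpha$ at which $\Ga$ first enters Day's hierarchy for $\EG$: with $\EG_0$ the class of finite and abelian groups, $\EG_{\alpha+1}$ the closure of $\EG_\alpha$ under taking subgroups, quotients, extensions, and directed unions, and $\EG_\alpha=\bigcup_{\beta<\alpha}\EG_\beta$ at limit $\alpha$, so that $\EG=\bigcup_\alpha\EG_\alpha$. Because $\Ga$ is finitely generated, any realization of $\Ga$ as a directed union of groups at lower level must stabilize, so in the inductive step only group extensions need to be handled.

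In the base case, $\Ga\cong\ZZ^r\oplus T$ with $r\ge 1$ and $T$ finite. Take $h\colon\Ga\to\ZZ$ to be projection onto the first $\ZZ$-summand and set $\sH:=h^{-1}(2\ZZ)$, which is normal of index~$2$. As a group homomorphism, $h$ is automatically $\sH$-difference-invariant, and harmonicity follows because $S=S^{-1}$ so that the values $h(s)$ and $h(s^{-1})$ cancel in pairs. Since $h$ is surjective, some $s\in S$ maps to $\pm 1$, which gives condition (c) of Definition \ref{def:height} at every vertex.

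For the inductive step, write $\Ga$ as an extension $1\to N\to\Ga\xrightarrow{\pi}Q\to 1$ with $N,Q$ at strictly lower level. If $Q$ is infinite, apply the inductive hypothesis to the finitely generated infinite group $Q$ to obtain $\sH_Q\normal Q$ such that every Cayley graph of $Q$ admits a harmonic strong GHF $(h_Q,\sH_Q)$. Set $\sH:=\pi^{-1}(\sH_Q)$, which is normal of finite index $[Q:\sH_Q]>1$ in $\Ga$. For any Cayley graph of $\Ga$ with generators $S$, the set $\pi(S)\setminus\{\id\}$ generates $Q$; pulling back via $h:=h_Q\circ\pi$ transports the three defining properties and harmonicity through $\pi$, with the harmonicity step using that generators of $\Ga$ sharing an image in $Q$ contribute with matching multiplicity. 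If instead $Q$ is finite, then $N$ is itself a finitely generated infinite group at a lower level; induction produces $\sH_N\normal N$ of finite index on which $\phi:=h_N|_{\sH_N}$ is a surjective homomorphism to $\ZZ$. Let $\sH$ be the normal core of $\sH_N$ in $\Ga$; this is a finite-index normal subgroup of $\Ga$ contained in $\sH_N$, and because $\ker\phi$ has infinite index in $\sH_N$ while $\sH$ has finite index, $\phi|_\sH$ remains surjective onto a copy of $\ZZ$. Fixing a transversal $\Ga=\bigsqcup_i t_i\sH$ with $t_1=\id$ and $h(t_1)=0$, define $h(t_i\sigma):=h(t_i)+\phi(t_i\sigma t_i^{-1})$ for $\sigma\in\sH$, and tune the remaining free constants $h(t_i)$ by a harmonic-averaging argument over the finite quotient $\Ga/\sH$ so that $h$ is harmonic and satisfies the growth condition at every vertex.

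The main obstacle is this last step. In the absence of $\Ga$-conjugation invariance of $\phi$, one must carefully select the coset-representative heights so that the extended $h$ simultaneously meets condition (c) and is harmonic for every choice of finite generating set of $\Ga$. This extends the construction in \cite[Thm~5.1]{GL-Cayley} for virtually solvable groups, where the derived series supplied the extra structural information needed.
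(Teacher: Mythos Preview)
Your overall strategy matches the paper's: transfinite induction along Day's hierarchy, directed unions dismissed by finite generation, and the extension case split according to whether $Q$ is finite or infinite, with the infinite-$Q$ case handled by pullback along $\pi$ (the paper's Lemma~\ref{lem:2}). One small slip in the base case: surjectivity of $h\colon\Ga\to\ZZ$ does not force any $h(s)=\pm 1$ (take $h(s_1)=2$, $h(s_2)=3$); but condition~(c) only needs some $h(s)\ne 0$, which is immediate since $h\not\equiv 0$, so the conclusion stands.

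The gap you yourself flag in the finite-$Q$ case is real as written. Your formula $h(t_i\sigma)=h(t_i)+\phi(t_i\sigma t_i^{-1})$ does give $\sH$-difference-invariance, and in fact the linear system for the constants $h(t_i)$ imposed by harmonicity is solvable (the Schreier graph on $\Ga/\sH$ is connected and $|S|$-regular, and the inhomogeneous term sums to zero because $S=S^{-1}$), after which condition~(c) follows from the maximum principle since $\phi|_\sH\not\equiv 0$ makes $h$ non-constant. But none of this is argued in your proposal. The paper does not carry out this construction by hand either: having observed (its Lemma~\ref{lem:GHF}, corresponding to your $\phi|_\sH$) that the restriction of $h_\sN$ to the normal core $\sH$ is a \GHF\ on $\sH$, it simply invokes \cite[Thm~3.5]{GL-Cayley}, which is precisely the black box that takes a finite-index normal subgroup $\sH\normal\Ga$ equipped with a \GHF\ and returns a harmonic \ghf\ $(h,\sH)$ on every Cayley graph of $\Ga$. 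You should cite that theorem (not Thm~5.1) rather than gesture at an averaging argument.
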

 
Whereas every member of $\EGF$ has a proper, normal subgroup
with finite index, it is proved in \cite{JM} that there exist amenable \emph{simple}
groups.

We review next the structure of $\EG$. Let $\EG_0$ be the class of all groups
that are either finite or abelian (or both), and let $\sO$ be the class of all ordinals. 
Let $\alpha\in\sO$, $\a\ne 0$, and assume we have defined $\EG_{\beta}$ for each $\be\in\sO$,
$\beta<\alpha$. Each $\a\in\sO$ is either a limit ordinal or a successor ordinal.
If $\alpha$ is a limit ordinal, we set 
\begin{equation}\label{eq:limitord}
\EG_{\alpha}=\bigcup_{\be<\a} \EG_{\beta}.
\end{equation}
If $\alpha$ is a successor ordinal, let $\EG_{\alpha}$ be the class of groups 
which can be obtained from members of $\EG_{\alpha-1}$ by no more
than  one operation of extension or directed union.

\begin{theorem}[\cite{CH80}]\label{lem:cc}
We have that $\EG=\bigcup_{\a\in\sO} \EG_{\alpha}$.
\end{theorem}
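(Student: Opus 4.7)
The plan is to show both inclusions; one direction is essentially by definition, while the other requires proving that the hierarchy $\{\EG_\alpha\}$ is already closed under the two operations (subgroups and quotients) that are not explicitly built into the successor step.

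For the inclusion $\bigcup_{\alpha \in \sO} \EG_\alpha \subseteq \EG$, I would argue by transfinite induction on $\alpha$. The base case $\EG_0 \subseteq \EG$ holds by definition. For a limit ordinal $\alpha$, the set-theoretic union \eqref{eq:limitord} lies inside $\EG$ if each $\EG_\beta$ does. For a successor $\alpha$, each element of $\EG_\alpha$ is either already in $\EG_{\alpha-1}$ or is obtained from members of $\EG_{\alpha-1}$ by a single extension or directed union, and $\EG$ is closed under both by definition.

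For the reverse inclusion, set $\sC := \bigcup_{\alpha \in \sO} \EG_\alpha$. By minimality of $\EG$, it suffices to verify that $\sC$ contains $\EG_0$ (trivial, as $\EG_0 = \EG_0$) and is closed under extensions, directed unions, subgroups, and quotients. Closure under extensions is immediate: if $1 \to N \to G \to Q \to 1$ with $N \in \EG_\beta$ and $Q \in \EG_\gamma$, then $N,Q \in \EG_{\max(\beta,\gamma)}$, so $G \in \EG_{\max(\beta,\gamma)+1} \subseteq \sC$. Closure under directed unions is analogous, using $\beta := \sup_i \beta_i + 1$ for a directed family $(G_i)$ with $G_i \in \EG_{\beta_i}$.

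The main obstacle, and the only substantive part of the argument, is closure of $\sC$ under subgroups and quotients. I would prove by transfinite induction on $\alpha$ the stronger statement that each $\EG_\alpha$ is itself closed under subgroups and quotients. The base case uses the trivial facts that subgroups and quotients of finite (\resp, abelian) groups are finite (\resp, abelian). The limit case is immediate from \eqref{eq:limitord}. For the successor case $\alpha = \be+1$, I would split into three subcases according to how $G \in \EG_\alpha$ is built from $\EG_\beta$:
\begin{letlist}
\item If $G \in \EG_\beta$, apply the inductive hypothesis directly.
\item If $G$ is an extension $1 \to N \to G \to Q \to 1$ with $N,Q \in \EG_\beta$, and $H \le G$, then $H$ fits into $1 \to H \cap N \to H \to H/(H\cap N) \to 1$, where $H \cap N \le N$ and $H/(H\cap N) \hookrightarrow Q$ lie in $\EG_\beta$ by induction; hence $H \in \EG_\alpha$. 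For a quotient $G/K$, the sequence $1 \to NK/K \to G/K \to G/NK \to 1$ displays $G/K$ as an extension of $N/(N\cap K)$ by $G/NK$, each a quotient of $N$ or $Q$ and hence in $\EG_\beta$ by induction.
\item If $G = \bigcup_i G_i$ is a directed union of $G_i \in \EG_\beta$, then $H = \bigcup_i (H \cap G_i)$ is a directed union of subgroups $H \cap G_i \le G_i$, in $\EG_\beta$ by induction; and $G/K = \bigcup_i G_i K/K$ with $G_i K/K \cong G_i/(G_i \cap K)$ a quotient of $G_i$, again in $\EG_\beta$ by induction.
\end{letlist}
In each subcase the resulting group lies in $\EG_\alpha$, completing the induction. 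Combining the two inclusions yields the theorem. The only delicate bookkeeping is the simultaneous handling of subgroups and quotients across the three subcases; once this is in place, the rest is formal.
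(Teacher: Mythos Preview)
The paper does not supply its own proof of this theorem; it is quoted directly from \cite{CH80} and used as a black box in the proof of Theorem~\ref{st}. Your argument is correct and is essentially the standard one given by Chou in that reference: the nontrivial step is precisely your observation that each $\EG_\alpha$ is closed under subgroups and quotients, proved by transfinite induction with the three subcases you list. The bookkeeping in subcases (b) and (c) is handled correctly (in particular the identifications $H/(H\cap N)\hookrightarrow Q$ and $G_iK/K \cong G_i/(G_i\cap K)$ are the right ones). Since the paper offers no proof to compare against, there is nothing further to contrast.
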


\begin{proof}[Proof of Theorem \ref{st}]

Let $\EGF_\a=\EGF \cap \EG_\a$.  
For $\a\in\sO$, let H$_\a$ be the following statement:
\begin{itemize}
\item [$\text{H}_\a$\ :] for $\be\in\sO$, $\beta\le\alpha$, and $\Ga\in \EGF_{\beta}$, there exists
$\sH\normal\Ga$ such that every locally finite Cayley graph of $\Ga$ admits
a harmonic, \sghf\ of the form $(h,\sH)$.
\end{itemize}

Now, $\EGF_0$ is the set of infinite, finitely generated, abelian groups. 
By \cite[Prop.\ 4.3, Thm 5.2(b)]{GL-Cayley}, any locally finite Cayley graph of $\Ga$ has a 
\GHF, and hence a harmonic, \sghf\ of the form $(h,\Ga)$. Therefore, H$_0$ holds, and we turn to the 
induction step.

Let $\a\in\sO$, $\a\ne 0$, and assume H$_\be$ holds for all $\be< \a$. 
Let $\Ga\in \EGF_{\alpha}$ with $\a$ the smallest such ordinal.
There are two cases to consider, depending on whether or not $\a$ is a limit ordinal.
\emph{If $\alpha$ is a limit ordinal}, by \eqref{eq:limitord},
there exists $\be\in\sO$, $\be < \a$, such that $\Ga\in\EGF_\be$. 
The claim now follows by H$_\be$. 

\emph{We assume for the remainder of this proof that $\a$ is a successor ordinal.}
By Theorem \ref{lem:cc}, the group $\Ga\in\EGF_\a$ is obtained from groups in $\EGF_{\alpha-1}$ by 
exactly one operation of either extension or directed union. 
That is, there are two sub-cases to consider.
\begin{letlist}
\item There exist $\sN',\sQ'\in \EGF_{\alpha-1}$ such that $\sN'$ is isomorphic to a 
normal subgroup $\sN$ of $\Ga$, and $\sQ'\simeq \sQ:=\Ga/\sN$.
\item There exist a directed set $\La$ and a family $(S_{\lambda}:\lambda\in \Lambda)$ 
satisfying 
\begin{romlist}
\item $S_\la\in\EGF_{\a-1}$, 
\item $S_{\lambda_1}\subseteq S_{\lambda_2}$ whenever $\la_1\le\la_2$, 
\item $\Ga=\bigcup_{\lambda\in \Lambda}S_{\lambda}$.
\end{romlist}
\end{letlist}

\medskip
\noindent
{\bf Assume (a) holds.}
Since $\Ga$ is finitely generated, so is $\sQ$. 

\medskip\noindent
\emph{Suppose $\sQ$ is infinite.}
We shall use the fact that $\sQ\in \EGF_{\alpha-1}$. Let $S$ be a finite set of generators of $\Ga$
with $S=S^{-1}$ and $\id\notin S$,
and let $G=G(\Ga,S)$ be the corresponding  Cayley graph of $\Ga$. 
A locally finite Cayley graph $G_{\sQ}$ of $\sQ$ may be constructed as follows. 
Let
\begin{equation*}
\ol{S}=\{\ol{s}=s\sN:  s\in S\},
\end{equation*}
be the (finite) generator set of $\sQ$ derived from $S$.
The vertex-set of $G_{\sQ}=G_\sQ(Q,\ol S)$ is the set of cosets $\{\ol v:= v\sN: v \in \Ga\}$,
and two such vertices $\ol{v}$, $\ol{w}$ are connected by an edge of $G_{\sQ}$ if and only if there exist  
$v\in\ol{v}$, $w\in\ol{w}$ such that $v$ and $w$ are connected by an edge in $G$. 

By H$_{\a-1}$, there exists $\ol\sH\normal\sQ$, not depending
on the choice of $S$, such that $G_\sQ$
admits a harmonic, \sghf\ $(h_\sQ,\ol\sH)$.
Let $h:\Ga\to\ZZ$ and $\sH\subseteq\Ga$ be given by
\begin{equation}\label{eq:4}
 h(v)=h_{\sQ}(\ol{v}),\qq
\sH=\bigcup_{\g\sN\in\ol\sH} \g\sN.
\end{equation}
The following lemma completes the proof of this case.

\begin{lemma}\label{lem:2}
We have that:
\begin{letlist}
\item $\sH\normal\Ga$, and $\sH$ acts quasi-transitively on $G$ by left-multiplication,
\item the pair $(h,\sH)$ is a harmonic, \sghf\ of $G$.
\end{letlist}
\end{lemma}

\begin{proof}
(a) Since $\ol\sH\normal\sQ$, we have that $(a\sN)\ol\sH(a\sN)^{-1}=\ol\sH$ for $a\in \Ga$,
whence 
\begin{equation}\label{eq:3}
(a\g a^{-1})\sN \in \ol\sH\q \text{whenever}\q a,\g\in\Ga,\ \g\sN\in\ol\sH.
\end{equation}
It is elementary that, for $a\in \Ga$ and $\g_1\sN,\g_2\sN\in\ol\sH$,
\begin{equation}\label{eq:5}
(a\g_1 a^{-1})\sN = (a\g_2a^{-1})\sN \q \text{if and only if}\q \g_1\sN=\g_2\sN. 
\end{equation}

Since $\ol\sH$ is a group, so is $\sH$. For $a\in \Ga$, by \eqref{eq:4}--\eqref{eq:5},
\begin{align*}
a\sH a^{-1}&=\bigcup_{\g \sN \in \ol\sH} a(\g\sN)a^{-1}
=\bigcup_{\g\sN\in\ol\sH} (a\g a^{-1})\sN\\
&=\bigcup_{\g\sN\in\ol\sH} \g \sN = \sH.
\end{align*}
Therefore, $\sH\normal \Ga$. We prove next that $[\Ga:\sH]<\oo$.

Since $(h_\sQ,\ol\sH)$ is a \ghf, we have that $[\sQ:\ol\sH]<\oo$.
Let $\ol{W}_1, \ol{W}_2,\dots,\ol{W}_k$ be the cosets of $\ol{\sH}$ in $\sQ$,
and let 
\begin{equation*}
W_i=\bigcup_{\g\sN \in \ol W_i} \g\sN.
\end{equation*}
We show next that each $W_i$ is contained in an orbit of $\sH$ acting
on $\Ga$.  (Actually the $W_i$ \emph{are} the orbits.) It follows that $\sH$ acts
quasi-transitively on $G$.

Without loss of generality, let $u,v \in W_1$. We shall show that there exists
$\nu\in\sH$ such that $v=\nu u$.
Suppose $u \in a\sN$, $v \in b\sN$ where $a\sN, b\sN  \in \ol W_1$. 
There exists $\g\sN\in\ol\sH$ such that $\g\sN a\sN=b \sN$, which is to
say that  $a\sN b^{-1}\in\ol\sH$.

There exist $n_i$ such that $u=an_1$, $v=b n_2$. Then, $u=(an_1n_2^{-1}b^{-1})v$,
and $\nu:=a(n_1n_2^{-1})b^{-1}\in \sH$ by \eqref{eq:4}.

(b) It is trivial that $h(\id)=h_{\sQ}(\ol\id)=0$.
For $\g\in \sH$ and $u,v\in \Ga$, we have
\begin{alignat*}{2}
h(\g u)-h(\g v)&=h_{\sQ}(\ol{\g}\ol{u})-h_{\sQ}(\ol{\g}\ol{v})\\
&=h_\sQ(\ol\g\, \ol u)-h_\sQ(\ol\g\, \ol v)\q&&\text{since $\sN$ is normal}\\
&=h_{\sQ}(\ol{u})-h_{\sQ}(\ol{v}) &&\text{since $h_\sQ$ is $\ol \sH$-difference invariant, 
$\ol\g\in\ol\sH$}\\
&=h(u)-h(v).
\end{alignat*}
Therefore, $h$ is \hdi.

For $v\in \Ga$, there exist $\ol{s}_1, \ol{s}_2\in \ol S$ such that
\begin{equation*}
h_{\sQ}(\ol{v}\,\ol{s}_1)<h_{\sQ}(\ol{v})<h_{\sQ}(\ol{v}\,\ol{s}_2),
\end{equation*}
whence, since $\sN$ is a normal subgroup of $\Ga$,
\begin{equation*}
h(vs_1)<h(v)<h(vs_2).
\end{equation*}
In conclusion, $(h,\sH)$ is a \sghf\ of $G$.

We show finally that $h$ is harmonic on the Cayley graph $G=(V,E)$.  The edges
incident to the vertex labelled $\g \in \Ga$ have the form $\langle \g, \g s\rangle$ for $s \in S$. 
Since $h_\sQ$ is harmonic on the quotient graph,
it suffices to show that the cardinality $N_s:=|\pd\g \cap (\g\sN s)|$ does not 
depend on the choice of $s\in 
S \setminus \sN$. For $s\in S\setminus \sN$ and  $n \in \sN$, $\g \sim \g ns$ if and only if
$ns\in S$, which is to say that $n \in S s^{-1}$, whence $N_s=|S|$.
\end{proof}

\medskip
\noindent
\emph{Suppose $\sQ$ is finite.}
Since $\sN\simeq\sN'\in\EGF_{\a-1}$, we have that $\sN\in \EGF_{\alpha-1}$ and $1<[\Ga:\sN]<\oo$. 
By H$_{\a-1}$, there exists $\sH'\normal\sN$ with $[\sN:\sH']<\oo$ such that any
locally finite Cayley graph $G_{\sN}$ of $\sN$
admits a \sghf\ of the form $(h_{\sN},\sH')$. 

Since $|\Ga/\sH'|=|\Ga/\sN|\cdot|\sN/\sH'|<\oo$, there exists (by Poincar\'e's Theorem for
subgroups) a subgroup $\sH\le \sH'$ that is normal in $\Ga$ with finite index, that is,
$\sH\normal \Ga$ and $1<[\Ga:\sH]<\oo$. Choose a locally finite Cayley graph $G_\sN$ of
$\sN$, and find a \sghf\ of the form $(h_\sN,\sH')$. Let $F:\sH\to\ZZ$ be the restriction
of $h_\sN$ to $\sH$. 

\begin{lemma}\label{lem:GHF}
The function $F$ is a \GHF\ on the group $\sH$.
\end{lemma}

\begin{proof}
As noted in \cite[Remark 4.2]{GL-Cayley}, a \GHF\ is 
a homomorphism from $\sH$ to $\ZZ$ that is not identically zero. For $\g_1,\g_2\in\sH$,
\begin{align*}
F(\g_1\g_2)-F(\g_1) &= h_\sN(\g_1\g_2)-h_\sN(\g_1)\\
&=h_\sN(\g_2)-h_\sN(\id)=F(\g_2),
\end{align*}
since $\g_1\in\sH'$ and $h_\sN$ is $\sH'$-difference invariant.
Therefore, $F$ is a homomorphism.  

It suffices now to show that $F\not\equiv 0$ on $\sH$.
Assume the converse, that $F \equiv 0$ on $\sH$.
For $\g\in\Ga$, there exists $a_\g\in \sN$ such that $\g\in a_\g \sH$,
so that $\g=a_\g\nu$ for $\nu\in\sH$. Since $h_\sN$ is
$\sH$-difference-invariant,
\begin{equation}\label{eq:6}
h_\sN(\g) = h_\sN(a_\g) + F(\nu) = h_\sN(a_\g).
\end{equation}
Now $|\sN/\sH|<\oo$, so we may restrict consideration to only finitely many $a_\g$.
Therefore, $h_\sN(\g)$ is bounded, which is impossible since $h_\sN$ is
a \ghf. We deduce that $F\not\equiv 0$ on $\sH$.
\end{proof}

Let $G=G(\Ga,S)$ be a locally finite Cayley graph of $\Ga$.
The triple $(\Ga,\sH,F)$ satisfies the conditions
of \cite[Thm 3.5]{GL-Cayley} with $\sH$ acting by left multiplication, and it follows that $G$ possesses
a harmonic \ghf\ of the form $(h,\sH)$.
 
\medskip\noindent
{\bf Assume (b) holds.}
Let $\Ga$ be finitely generated with finite generator set $S=\{s_1,s_2,\dots,s_k\}$. 
Since $\Ga=\bigcup_{\la\in \La}S_{\lambda}$, there exists $\la_i\in \La$ such that $s_i\in S_{\la_i}$. Let $L=\max\{\lambda_1,\la_2,\dots,\lambda_k\}$, so that $S_{L}=\Ga$. 
Then $\Ga\in \EGF_{\alpha-1}$, which contradicts the
minimality of $\a$.  
\end{proof}

\section{Criteria for the absence of height functions}\label{sec:noht}

This section contains some observations relevant to proofs 
in Sections \ref{sec:pfgrig}--\ref{sec:pfhig} of the non-existence
of \ghf s.

Let $\Ga=\langle S \mid R\rangle$ where $|S|<\oo$, and
let $G=(V,E)$ be the corresponding Cayley graph. Let
$\Pi$ be the set of permutations of $S$ that preserve $\Ga$
up to isomorphism, and write $e \in \Pi$ for the identity. 
Thus, $\pi\in\Pi$ acts on $\Ga$ by: for $w=s_1s_2\cdots s_m$
with $s_i\in S$, we have $\pi(w)=\pi(s_1)\pi(s_2)\cdots \pi(s_m)$.
It follows that  $\Pi\subseteq \Aut(G)$.
For $\g=g_1g_2\cdots g_n\in\Ga$ with $g_i \in S$, and $\pi\in\Pi$, 
we define $\g\pi\in\Aut(G)$ by $\g\pi(w)=g_1g_2\cdots g_n \pi(w)$,
$w\in V$. Write $\Ga\Pi\subseteq \Aut(G)$ for the subgroup containing all such $\g\pi$,
and note that $\g e$ operates on $G$ in the manner of
$\g$ with left-multiplication.

The \emph{stabilizer} $\Stab_v$ of  $v\in V$
is the set of automorphisms of $G$ that preserve $v$, that is,
$$
\Stab_v = \{\eta\in\Aut(G): \eta(v)=v\}.
$$

\begin{proposition}\label{thm:stab}
Suppose $\Stab_\id=\Pi$. 
\begin{letlist}
\item $\Aut(G) = \Ga\Pi$.
\item If $\sM \normal \Aut(G)$ has finite index,  the subgroup
 $\sN = \sM\cap \Ga$ satisfies $\sN \normal \Ga$ and $[\Ga:\sN]<\oo$.
\item If $G$ has a \ghf, then it has a \sghf.
\end{letlist}
\end{proposition}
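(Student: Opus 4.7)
For part (a), the plan is to use the transitivity of the left-multiplication action of $\Ga$ on itself, combined with the stabilizer hypothesis. Given $\alpha\in\Aut(G)$, set $v=\alpha(\id)$ and pick $\g\in\Ga$ with $\g\cdot\id=v$; then $(\g e)^{-1}\alpha$ fixes $\id$, hence lies in $\Stab_\id=\Pi$, which yields $\alpha\in\Ga\Pi$. The reverse containment is built into the definition of $\Ga\Pi$.

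For part (b), normality of $\sN=\sM\cap\Ga$ in $\Ga$ is immediate since $\sM$ is normal in $\Aut(G)\supseteq\Ga$ and $\Ga$ is closed under conjugation. For finite index, the plan is to consider the composition
\[
\Ga\hookrightarrow\Aut(G)\twoheadrightarrow\Aut(G)/\sM,
\]
whose kernel is exactly $\sN$. This embeds $\Ga/\sN$ into the finite group $\Aut(G)/\sM$, giving $[\Ga:\sN]\le[\Aut(G):\sM]<\oo$.

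For part (c), start from an arbitrary \ghf\ $(h,\sH)$ on $G$. Since $\sH$ acts quasi-transitively on the transitive graph $G$, it has finite index in $\Aut(G)$ (here one needs that $\Ga\le\Aut(G)$ is transitive, so any quasi-transitive subgroup has finite index). Apply Poincar\'e's theorem (as in Remark \ref{rem:Poin}) to extract $\sM\normal\Aut(G)$ with $\sM\subseteq\sH$ and $[\Aut(G):\sM]<\oo$; then $(h,\sM)$ is still a \ghf, because difference-invariance is inherited by subgroups and finite index is preserved. Now invoke part (b) to obtain $\sN=\sM\cap\Ga\normal\Ga$ with $[\Ga:\sN]<\oo$. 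It remains to verify that $(h,\sN)$ satisfies the three clauses of Definition \ref{def:height} together with the normality-in-$\Ga$ clause of Definition \ref{def:sghf}: the normalization $h(\id)=0$ is unchanged; $\sN$-difference-invariance is inherited from $\sM$-difference-invariance via $\sN\subseteq\sM$; quasi-transitivity of $\sN$ on $G$ follows from $[\Ga:\sN]<\oo$ and transitivity of $\Ga$; and the local up/down condition on $h$ is a property of $h$ alone, independent of the choice of group.

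The only real work is in part (c), and the key conceptual step is the reduction to a normal $\sM\normal\Aut(G)$ so that (b) can be applied; after that, all verifications are routine containments. The main thing to watch is that $\sH$ really does have finite index in $\Aut(G)$, which uses $\Ga\subseteq\Aut(G)$ acting transitively together with quasi-transitivity of $\sH$.
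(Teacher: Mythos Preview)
Your argument is essentially the same as the paper's, and the overall structure is correct. Parts (a) and (b) are fine; in fact your proof of (b) via the homomorphism $\Ga\to\Aut(G)/\sM$ is cleaner than the paper's, which bounds $[\Aut(G):\sN]$ by $[\Aut(G):\Ga]\cdot[\Aut(G):\sM]$ and so needs the conclusion of (a) that $[\Aut(G):\Ga]\le|\Pi|<\oo$.

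There is one genuine gap in your justification for part (c). You claim that a quasi-transitive subgroup $\sH\le\Aut(G)$ has finite index in $\Aut(G)$, and you justify this by saying ``$\Ga\le\Aut(G)$ is transitive, so any quasi-transitive subgroup has finite index.'' That implication is false in general: take $G$ to be the $4$-regular tree viewed as the Cayley graph of the free group $F_2$; then $F_2$ itself is transitive (hence quasi-transitive) but has infinite index in $\Aut(G)$, since vertex stabilizers in $\Aut(G)$ are infinite. What makes the implication valid here is precisely the hypothesis $\Stab_\id=\Pi$, which forces all vertex stabilizers in $\Aut(G)$ to be finite (of size $|\Pi|$). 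With that in hand, if $\sH$ has $k$ orbits with representatives $v_1,\dots,v_k$ and $g_i\in\Aut(G)$ satisfies $g_i(\id)=v_i$, then every $g\in\Aut(G)$ lies in some $\sH g_i\Pi$, giving $[\Aut(G):\sH]\le k|\Pi|<\oo$. Once this is fixed, the remainder of your argument (Poincar\'e to pass to normal $\sM$, then part (b), then the routine verifications) goes through exactly as you wrote it, and matches the paper's approach.
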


\begin{proof}
Assume $\Stab_\id=\Pi$. 

(a)
Let $\eta\in\Aut(G)$, and write $\g=\eta(\id)$. Then $\g^{-1}\eta\in\Stab_\id$,
which is to say that $\g^{-1}\eta=\pi\in\Pi$, and thus
$\eta=\g\pi \in \Ga\Pi$ so that $\Aut(G)=\Ga\Pi$. Note for future use that
$$
[\Aut(G):\Ga] = |\Pi| < \oo.
$$

(b) 
Let $\sM\normal \Aut(G)$ be a finite-index normal subgroup,
and let $\sN=\{\g e: \g e \in \sM\}$. Viewed as automorphisms,
we have that $\g e= \g$, and hence $\sN \le \Ga \le \Aut(G)$.
For $\a\in\Ga$, $\nu\in \sN$, we have that $(\a^{-1}\nu\a) e =\a^{-1}(\nu e)\a \in \sM$, 
since $\sM \normal \Aut(G)$. Therefore, $\sN \normal \Ga$.

Since $\Ga,\sM\le \Aut(G)$ and $\sN =\Ga \cap \sM$, we have that
$$
[\Aut(G):\sN] \le [\Aut(G):\Ga]\cdot[\Aut(G): \sM]<\oo,
$$
which implies $[\Ga:\sN]<\oo$, as required.

(c) Let $(h,\sH)$ be a \ghf\ of $G$. Since $\sH$ is a finite-index normal subgroup of $\Aut(G)$,
by part (b), there exists $\sN \le \sH$ that is a finite-index normal subgroup of $\Ga$.
Since $\sN \le \sH$, $h$ acts on $\Ga$ and is $\sN$-difference invariant,
whence $(h,\sN)$ is a \sghf. 
\end{proof}

\begin{corollary}\label{cor}
Let $\Ga=\langle S\mid R\rangle$ have Cayley graph $G$ satisfying $\Stab_\id = \Pi$.
\begin{letlist}
\item  If $\Ga$ has no proper, normal subgroup with finite index, any \ghf\ of
$G$ is also a \GHF\ of $\Ga$.

\item  If every element in $\Ga$ has finite order, then $G$ has no \ghf.
\end{letlist}
\end{corollary}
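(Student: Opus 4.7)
The plan is to reduce both parts to Proposition \ref{thm:stab}(c), which under the hypothesis $\Stab_\id=\Pi$ promotes any \ghf\ of $G$ to a \sghf\ $(h,\sN)$ with $\sN\normal\Ga$ and $[\Ga:\sN]<\oo$.

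For part (a), the hypothesis that $\Ga$ has no proper, normal, finite-index subgroup forces $\sN=\Ga$. Hence $h$ is $\Ga$-difference-invariant: $h(\g u)-h(\g v)=h(u)-h(v)$ for all $\g,u,v\in\Ga$. Setting $v=\id$ and using $h(\id)=0$ yields $h(\g u)=h(\g)+h(u)$, so $h$ is a homomorphism from $\Ga$ to $\ZZ$. Conditions (b) and (c) of Definition \ref{def:GHF} are then automatic, and condition (a) holds since $h(\id)=0$ and property (c) of Definition \ref{def:height} forces $h\not\equiv 0$. Thus $h$ is a \GHF\ of $\Ga$.

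For part (b), I argue by contradiction. Supposing $G$ admits a \ghf, Proposition \ref{thm:stab}(c) gives a \sghf\ $(h,\sN)$. Exactly as in the proof of Lemma \ref{lem:GHF}, the $\sN$-difference invariance of $h$ together with $h(\id)=0$ shows that the restriction $F:=h|_\sN$ is a homomorphism from $\sN$ to $\ZZ$. Since every element of $\Ga$, hence of $\sN$, has finite order and $\ZZ$ is torsion-free, $F\equiv 0$. Choose coset representatives $a_1,\dots,a_k$ of $\sN$ in $\Ga$. For any $\g\in\Ga$, write $\g=a_i\nu$ with $\nu\in\sN$ and use $\sN\normal\Ga$ to rewrite $\g=(a_i\nu a_i^{-1})a_i=\nu' a_i$ with $\nu'\in\sN$. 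The homomorphism identity then gives $h(\g)=F(\nu')+h(a_i)=h(a_i)$, so $h$ takes at most $k$ distinct values. On the other hand, iterating property (c) of Definition \ref{def:height} produces an infinite path in $V$ along which $h$ strictly increases, so $h$ is unbounded. This contradiction completes the argument.

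The main obstacle is mild: both parts are essentially algebraic exercises once Proposition \ref{thm:stab}(c) is in hand. The only moderately delicate point appears in (b), namely the need to couple $\sN$-difference invariance with normality of $\sN$ in $\Ga$ in order to replace right multiplication by $\sN$-elements with left multiplication, and thereby to conclude that $h$ is constant on each $\sN$-coset of $\Ga$.
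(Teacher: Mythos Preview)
Your proof is correct and follows essentially the same route as the paper's. The paper is slightly terser: for (a) it invokes Proposition~\ref{thm:stab}(b) directly to conclude $\Ga\subseteq\sM$ and then simply asserts that a \ghf\ of the form $(h,\Ga)$ is a \GHF, whereas you route through (c) and spell out the homomorphism verification; for (b) the paper runs the same boundedness contradiction but is less explicit than you are about using normality of $\sN$ to move the $\sN$-element to the left before applying difference invariance.
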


\begin{proof}
(a) Let $(h,\sM)$ be a \ghf\ of $G$.
If $\Ga$ satisfies the given condition then, by Proposition \ref{thm:stab}(b), $\sM \supseteq \Ga$.
Therefore, $(h,\Ga)$ is a \ghf\ and hence a \GHF.

(b) If $G$ has a \ghf, by Proposition \ref{thm:stab}(c), $G$ has a \sghf\ $(h,\sN)$.
Assume every element of $\Ga$ has finite order.
For $\g\in \sN$ with $\g^n=\id$,
we have that $h(\g^n)=nh(\g)=0$, whence $h \equiv 0$ on $\sN$.

We now use the argument around \eqref{eq:6}. For $\g\in\Ga$, find $\a_\g$ such that $\g\in\a_\g\sN$.
Since $h$ is $\sN$-difference-invariant, there exists $\nu\in \sN$ such that
\begin{equation}\label{eq:6.2}
h(\g) =h(\a_\g) + h(\nu)=h(\a_\g).
\end{equation}
Now $[\Ga:\sN]<\oo$, so we may consider only finitely many choices for $\a_\g$. Therefore,
$h$ is bounded on $\Ga$, in contradiction of the assumption that it is a \ghf.
\end{proof}

\section{Proof of Theorem \ref{grig}} \label{sec:pfgrig}

The main step is to show that 
\begin{equation}\label{eq:triv}
\Stab_\id=\{e\},
\end{equation}
where $e$ is
the identity of $\Aut(G)$. Once this is shown, claim (a) follows from
Corollary \ref{cor}(b) and the fact that every element of the Grigorchuk
group has finite order, \cite{dlH}.  
It therefore suffices for (a) to show \eqref{eq:triv}, and
to this end we study the structure of the Cayley graph $G=(V,E)$.

It was shown in \cite{L85} (see also \cite[eqn (4.7)]{RG05}) that $\Ga = \langle S \mid R\rangle$
where $S=\{a,b,c,d\}$, $R$ is the following  set of relations
\begin{align}\label{eq:relations}
\id&=a^2=b^2=c^2=d^2=bcd\\
&=\si^k((ad)^4)=\si^k((adacac)^4),
\qq k = 0,1,2,\dots,
\nonumber
\end{align}
and $\si$ is the substitution
$$
\si: \begin{cases} a \mapsto aca,\\
b \mapsto d,\\
c \mapsto b,\\
d \mapsto c.
\end{cases}
$$
It follows that the following, written in terms of the reduced generator set
$\{a, b, c\}$ after elimination of $d$, are valid relations:
\begin{equation}\label{eq:relations2}
\id = a^2=b^2=c^2=(bc)^2=(abc)^4=(ac)^8=(abcacac)^4=(acab)^8=(ab)^{16},
\end{equation}
(see also \cite[Sect.\ 1]{RG05}).
Note the asymmetry between $b$ and $c$ in that $ab$ (\resp, $ac$) has order $16$ (\resp, $8$).

Let
\begin{equation*}
V_n=\{v\in\Ga: \dist(v,\id)=n\},
\end{equation*}
where $\dist$ denotes graph-distance on $G$.
Since $G$ is locally finite, $|V_n|<\infty$. 
For $\eta\in \Stab_\id$, $\eta$ restricted to $V_n$ 
is a permutation of $V_n$. As illustrated in Figure \ref{fig:G},
$$
V_0=\{\id\},\qq V_1=\{a,b,c\},\qq V_2=\{ab,ac,ba,bc=cb, ca\}.
$$

\begin{figure}[htbp]
\centerline{\includegraphics*[width=0.35\hsize]{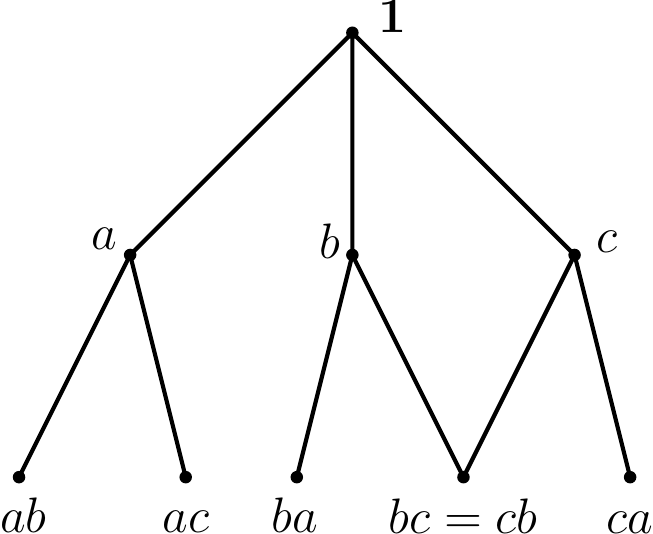}}
   \caption{The subgraph of $G$ on $V_0\cup V_1\cup V_2$.}
   \label{fig:G}
\end{figure}

Let $\eta\in\Stab_\id$, so that $\eta(a)\in V_1$. Since the shortest cycles using the edges 
$\langle\id,b\rangle$ and $\langle\id,c\rangle$ have length $4$, and using $\langle\id,a\rangle$
greater than $4$ (see Figure \ref{fig:G}), we have that
$\eta(a)=a$.
By a similar argument,  we obtain that, for $n \ge 1$,
\begin{equation}\label{eq:a}
\eta(va)=\eta(v)a, \qq v \in V_n,\ va\in V_{n+1},
\end{equation}
which we express by saying that $\eta$ maps $a$-type edges to $a$-type edges.

We show next that
\begin{equation}\label{eq:b}
\eta(vc)=\eta(v)c, \qq v \in V,\ \eta\in\Stab_\id,
\end{equation}
which is to say that $\eta$ maps $c$-type edges to 
$c$-type edges. By \eqref{eq:a}--\eqref{eq:b}, $\eta\in \Stab_\id$ maps
$b$-type edges to $b$-type edges also, whence $\eta=e$ as required. It remains to
prove \eqref{eq:b}.

Assume, in contradiction of \eqref{eq:b}, 
that there exists $v\in V$, $\eta\in \Stab_\id$ such that $\eta(vc)=\eta(v)b$. 
Since $ac$ has order $8$, we have that $(ca)^8=\id$.
Let $C$ be the directed cycle corresponding to the word $v (ca)^8$; thus, $C$ includes the edge 
$[ v,vc \rangle$. Then $\eta(C)$ is a cycle of length 
$16$ including the edge $[ \eta(v),\eta(v)b\rangle$. 
Since $C$ contains exactly $8$ $a$-type edges at alternating positions, by \eqref{eq:a},
so does $\eta(C)$.
Therefore, $\eta(C)$ has the form $\eta(v)ba\prod_{i=2}^{8}(x_i a)$, where $x_i\in\{b,c\}$ 
for $i=2,3,\dots,8$. In particular,
\begin{equation}\label{eq:ax}
ba\prod_{i=2}^{8}(x_ia)=\id, \qq x_i\in\{b,c\},\  i=2,3,\dots,8.
\end{equation}

The  word problem of the Grigorchuk group is solvable (see \cite{Grig84}
and \cite[Sect.\ 4]{RG05}),
in that there exists an algorithm to determine whether or not $w=\id$ for any 
given word $w\in \{a,b,c\}^*$ (where $S^*$ denotes the set of finite ordered sequences
of elements of $S$).
By applying this algorithm (see below), we deduce that \eqref{eq:ax}
has no solution. Equation \eqref{eq:b} follows, and the proof of part (a)
is complete.

Finally, here is a short amplification of the analysis of \eqref{eq:ax}.
The word in \eqref{eq:ax} has the form  $b(ay_1a)z_1(ay_2a)z_2(ay_3a)z_3(ay_4a)$, where
$y_i,z_j \in\{b,c\}$. By \eqref{eq:grigrels}, the effect of such a word on the right sub-tree $T_1$
is $\g_1:=ca(c/d)a(c/d)a(c/d)a$, where each term of the form $(y/z)$ is to be interpreted as 
\lq either $y$ or $z$'.
The effect of $\g_1$ on the left sub-tree $T_{10}$ of $T_1$ is $\g_{10}:=
a(d/b)(a/e)(d/b)$. If there is an odd number of appearances
of $a$ in $\g_{10}$, then $\g_{10}$ is not the identity, and thus we may assume  $\g_{10}:= a(d/b)a(d/b)$.
It is immediate that none of the four possibilities is the identity, and the claim follows.

Part (b) holds as follows. Suppose there exists $\sH\normal \Ga$, $\g\in\Ga$, and a non-constant
\hdi\  function $F:\g\sH\to\ZZ$.   It is elementary that $\sH$ is unimodular and
symmetric (see, for example, \cite[Sect.\ 4]{GrL3}). 
By \cite[Thm 3.5]{GL-Cayley} and the comment near the beginning
of \cite[Sect.\ 8]{GL-Cayley}, $G$ has a \ghf, in contradiction of part (a).

\section{Proof of Theorem \ref{thm:hg1}}\label{sec:pfhg}

We shall prove three statements:
\begin{romlist}
\item $\Ga$ has no \GHF,
\item $\Pi$ is
the cyclic group generated by the permutation $(abcd)$, with the convention that $\eta(x^{-1})
= \eta(x)^{-1}$, for $\eta\in\Pi$, $x\in\{a,b,c,d\}$,
\item $\Stab_\id=\Pi$.
\end{romlist}
It is proved in \cite{Hg} that the Higman group has
no proper, finite-index, normal subgroup, and
the result follows from the above statements by Corollary \ref{cor}(a).

\emph{Proof of (i).}
The absence of a \GHF\ is immediate by \cite[Example 6.3]{GL-Cayley}.

\emph{Proof of (ii).}
Evidently, $\Pi$ contains the given cyclic group, and we turn to the converse.
Since elements of $\Pi$ preserve $\Ga$ up to isomorphism,
\begin{equation}\label{eq:345}
\eta(x^{-1})=\eta(x)^{-1}, \qq x \in S.
\end{equation}
We next rule out the possibility that $\eta(x)=y^{-1}$ for some $x,y\in\{a,b,c,d\}$.
Suppose, for illustration, that $\eta(a)=b^{-1}$. By \eqref{eq:345}, the relation $a^{-1}ba=b^2$
becomes $b\be b^{-1}=\be^2$ where $\be=\eta(b)$. The Higman group has no such relation
with $\be \in S$. In summary,
\begin{equation}\label{eq:346}
\eta(x)\in\{a,b,c,d\},\q \eta(x^{-1})=\eta(x)^{-1}, \qq x \in \{a,b,c,d\}.
\end{equation}

The shortest cycles containing the edge $\langle \id,a\rangle$, 
modulo rotation and reversal, arise
from the relations
$ab^2 a^{-1}b^{-1}=\id$  and $ada^{-2}d^{-1}=\id$
(see Figure \ref{fig:BS-sheet}).
The first uses $a^{\pm1}$ twice and $b^{\pm1}$ thrice, and the second uses
$a^{\pm1}$ thrice and $d^{\pm1}$ twice. Let $\eta\in\Pi$, and suppose for illustration that 
$\eta(a)=b$ (the same argument is valid for any $\eta(x)$, 
$x\in\{a,b,c,d\}$). 
By considering the cycles starting $\langle\id,b\rangle$, 
$\langle\id,c\rangle$, $\langle\id, d\rangle$, and using \eqref{eq:346}, we deduce that
$$
\eta(b)=c, \q \eta(c)=d, \q \eta(d)=a,
$$
and the claim is proved.

\begin{figure}[htbp]
\centerline{\includegraphics*[width=0.6\hsize]{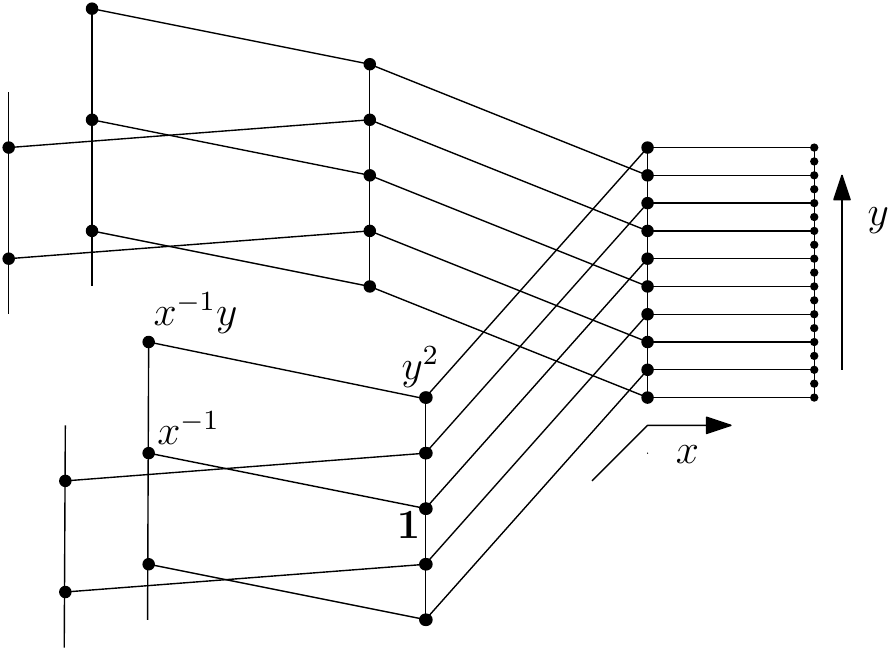}}
   \caption{Part of the Cayley graph 
   of the Baumslag--Solitar group $\BS(x,y)$.}
   \label{fig:BS-Cayley}
\end{figure}

\emph{Proof of (iii).}
We begin with some observations concerning the Baumslag--Solitar ($\BS$) group $\BS(x,y)$
with presentation $\langle x,y,x^{-1}, y^{-1} \mid x^{-1}yx=y^2\rangle$,
of which the Cayley graph is sketched in Figure \ref{fig:BS-Cayley}.
Edges of the form $\langle\g,\g x^{\pm 1}\rangle$ have \emph{type} $x$,
and of the form $\langle\g,\g y^{\pm 1}\rangle$  \emph{type} $y$. By inspection, 
the shortest cycles have length $5$ (see Figure \ref{fig:BS-sheet}), 
and, for $\g\in\BS(x,y)$,
\begin{align}\label{eq:22}
&\text{for $p,q=\pm 1$, the edges $\langle\g,\g x^p\rangle$ and $\langle\g,\g y^q\rangle$ 
lie in a common $5$-cycle,}\\
&\text{the third edge of any directed $5$-cycle beginning $[ \g, \g x\rangle$ has type $y$,}\label{eq:22.5}\\
&\text{the third edge of any directed $5$-cycle beginning $[ \g,\g x^{-1}\rangle$ has type $x$,}\label{eq:22.6}\\
&\text{every $5$-cycle contains two consecutive edges of type $y$, and not of type $x$,}\label{eq:22.7}\\
&\text{a type $x$ (\resp, type $y$) edge lies in $2$ (\resp, $3$) 5-cycles.}
\label{eq:23}\end{align}

\begin{figure}[htbp]
\centerline{\includegraphics*[width=0.44\hsize]{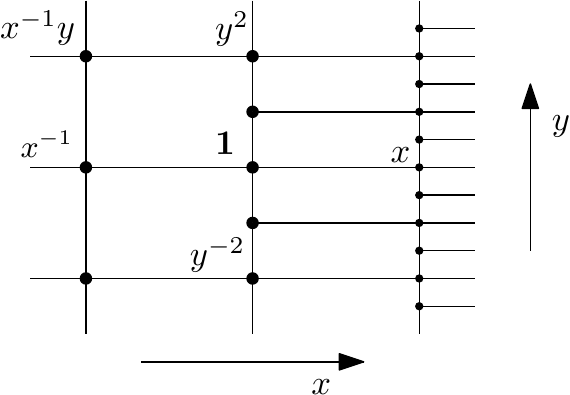}}
   \caption{Part of one `sheet' of the Cayley graph of $\BS(x,y)$.}
   \label{fig:BS-sheet}
\end{figure}

Returning to the Higman group, for convenience, we relabel the vector
$(a,b,c,d)$ as $(s_0,s_1,s_2,s_3)$,
with addition and subtraction of indices modulo $4$.
Let $G$ be the Cayley graph of the Higman group $\Ga=\langle S\mid R\rangle$, rooted at $\id$. An edge
of $G$ is said to be of \emph{type} $s_i$ if it has the
form $\langle \g,\g s_i^{\pm 1}\rangle$ with $\g\in\Ga$. We explain next how
to obtain  information about the types of the edges of $G$, by examination of $G$ only, 
and without further information about the vertex-labellings as elements of $\Ga$.

We consider first the set $\pde \id$ of edges of $G$ incident to $\id$.
Let $e_1=\langle \id,v\rangle $, $t\in\{0,1,2,3\}$, and $p\in\{-1,1\}$. 
Assume that 
\begin{equation}\label{eq:ass1}
v=s_t^ p,
\end{equation}
so that, in particular, $e_1$ has type $s_t$.
By \eqref{eq:22}, for $j=\pm 1$, $e_1$ lies in a $5$-cycle of
$\BS(s_{t-1},s_t)$ (\resp, $\BS(s_t,s_{t+1})$) containing $\langle\id, s_{t-1}^j\rangle$
(\resp, $\langle\id, s_{t+1}^j\rangle$).
On the other hand, by consideration of the relator set $R$, 
$e_1$ lies in no $5$-cycle including an edge of type $s_{t+2}$.
Therefore, the edges  of the form $\langle \id , s_{t+2}^{\pm 1}\rangle$
may be identified by examination of $G$, 
and we denote these as $g_1$, $g_2$. 
There is exactly one further edge of $\pde\id$ that lies in no $5$-cycle containing
either $g_1$ or $g_2$, and we denote this edge as $e_2$.
In summary, 
$$
\{e_1,e_2\}=\bigl\{\langle \id,s_t^{-1}\rangle,\langle \id,s_t\rangle\bigr\},\qq
\{g_1,g_2\} =\bigl\{\langle \id,s_{t+2}^{-1}\rangle, \langle \id,s_{t+2}\rangle\bigr\}.
$$ 

Having identified the edges of $\pde\id$ with types $s_t$ and $s_{t+2}$,
we move to the other endpoint $v=s_t^p$ of $e_1$, and apply the same argument.
Let $e_1$, $e_1'$ be the two type-$s_t$ edges incident to $v$.

We turn next to the remaining four edges of $\pde\id$. Let $k$ be such an edge, and
consider the property: \emph{$k$ lies in a $5$-cycle of $G$ containing both $e_1$
and $e_1'$}.
By \eqref{eq:22.7} and examination of
the Cayley graphs of the four groups $\BS(s_i,s_{i+1})$, $0\le i < 4$,
we see that $k$ has this property if it has type $t-1$, and not if it has type  $t+1$.
Thus we may identify the types of the four remaining edges of $\pde\id$, which
we write as 
$$
\{f_1,f_2\}=\bigl\{\langle \id,s_{t+1}^{-1}\rangle,\langle \id,s_{t+1}\rangle\bigr\},\qq
\{h_1,h_2\} =\bigl\{\langle \id,s_{t+3}^{-1}\rangle, \langle \id,s_{t+3}\rangle\bigr\}.
$$

Having determined the types of edges in $\pde\id$ 
(relative to the type $t$ of the initial edge $e_1$),
we move to an endpoint of  such an edge other than $\id$, and apply the same argument.  By 
iteration, we deduce the types of all edges of $G$. Let $T(k)$ denote the type of
edge $k$. It follows from the above that 
\begin{equation}\label{eq:rot}
\text{$T(k)-T(e_1)$ is independent of $t=T(e_1)$},
\end{equation}
with arithmetic on indices, modulo $4$.

We explain next how to identify the value of $p=p(v)$ in \eqref{eq:ass1} from the
graphical structure of $G$.
Let $S_i$ be the subgraph of $G$
containing all edges with type either $s_i$ or $s_{i+1}$, so that 
each component of $S_i$ is isomorphic to the Cayley
graph of $\BS(s_i,s_{i+1})$.  By \eqref{eq:22.5}--\eqref{eq:22.6},
every directed $5$-cycle of $\BS(s_t,s_{t+1})$  starting with the edge 
$[\id, s_t\rangle$  has third edge with type $s_{t+1}$, 
whereas every directed $5$-cycle starting with  
$[\id, s_t^{-1}\rangle$  has third edge with type $s_t$.
We examine $S_t$ to determine which of these two cases holds,
and the outcome determines the value of $p=p(v)$.

The above argument is applied to each directed edge $[\g,\g s_i^{\pm 1}\rangle$
of $G$, and the power of $s_i$ is thus determined from the graphical structure of $G$.

Let $\eta\in\Stab_\id$. By \eqref{eq:rot}, the effect of $\eta$ is to
change the edge-types by 
$$
T(k) \mapsto T(k) + T(\eta(e_1))-t.
$$ 
Now, $\eta(v)$ is adjacent to $\id$ and, by the above, once $\eta(v)$ is known,
the action of $\eta$ on the rest of $G$ is determined.  Since $\eta\in\Aut(G)$,
$\eta(v)$ may be any neighbour $w$ of $\id$ with the property that $p(w)=p(v)$.
There are exactly four such neighbours (including $v$)
and we deduce from \eqref{eq:rot} that $\eta$ lies in the cyclic group
generated by the permutation $(s_0s_1s_2s_3)$.

\section{Proof of Theorem \ref{thm:hg2}}\label{sec:pfhig}

We shall prove three statements:
\begin{romlist}
\item $\Ga$ has no \GHF,
\item $\Stab_\id=\Pi$ where $\Pi=\{e\}$,
\item $\Ga$ has no proper normal subgroup with finite index.
\end{romlist}
The result follows from these statements by Corollary \ref{cor}(a), and we turn to their proofs.

\emph{Proof of (i).}
The absence of a \GHF\ is immediate by \cite[Thm 4.1(b)]{GL-Cayley}.

\emph{Proof of (ii).}
Let $\eta\in\Stab_\id$ and $\g \in \Ga$. We consider the action of $\eta$ on directed edges of $G$.
By inspection of the set $R'$ of relations, an edge of the type $\langle \g, \g x\rangle$
lies in shortest cycles of length
\begin{equation*}
\begin{cases} 5,\ 8 &\text{if } x=a^{\pm 1},\\
5,\ 7 &\text{if } x=b^{\pm 1},\\
7,\ 8 &\text{if } x=c^{\pm 1},\\
9,\ 11 &\text{if } x=d^{\pm 1}.
\end{cases}
\end{equation*}
Since the four combinations are distinct, it must be that
\begin{equation}\label{eq:230}
\eta([\g,\g x\rangle) = [\g',\g'x^{\pm 1}\rangle, \qq \g\in\Ga, \ x\in S,
\end{equation}
where $\g'=\eta(\g)$.
We  show next that
\begin{equation}\label{eq:213}
\eta([\g,\g x\rangle) \ne [\g',\g' x^{-1}\rangle, \qq \g\in\Ga,\ x\in S,
\end{equation} 
which combines with \eqref{eq:230} to imply $\eta=e$ as required.

It suffices to consider the case $x=a$ in \eqref{eq:213},
since a similar proof holds in the other cases.
Suppose $\eta([\g,\g a\rangle)=[\g',\g' a^{-1}\rangle$, and consider the 
cycle corresponding to $\g ab^{-2}a^{-1}b^{-1}$, that is $(\g,\g a,\g a b^{-1},\g ab^{-2}, \g ab^{-2}a^{-1},
\g ab^{-2}a^{-1}b^{-1}=\g)$. 
By \eqref{eq:230}, this is mapped under $\eta$ to the cycle 
corresponding to  $\g' a^{-1}b^{\pm 2}a^{\pm 1}b^{\pm 1}$.
By examining the relation set $R'$, the only cycles beginning $\g' a^{-1}b^{\pm 1}$ with length 
not exceeding $5$ are $\g'a^{-1}bab^{-2}$ and $\g'a^{-1}b^{-1}ab^2$, in contradiction
of the above (since the third step of these two cycles is $a$ rather than the required $b^{\pm 1}$).

\emph{Proof of (iii).}
Suppose $\sN$ is a proper normal subgroup of $\Ga$ with finite index. 
The quotient group $\Ga/\sN$ is non-trivial and finite  with generators $\ol s=s\sN$,
$s\in S$, satisfying
\begin{equation}\label{eq:ord3}
\begin{alignedat}{3}
\ol{a}^{-1}\ol{b}\ol{a}&=\ol{b}^2,&\qq
\ol{b}^{-2}\ol{c}\ol{b}^2&&=\ol{c}^2,\\
\ol{c}^{-3}\ol{d}\ol{c}^3&=\ol{d}^2,&\qq
\ol{d}^{-4}\ol{a}\ol{d}^4&&=\ol{a}^{2}.
\end{alignedat}
\end{equation}
Since $\Ga/\sN$ is finite, each $\ol s$ has finite order, denoted $\Od(\ol{s})$.
It follows from \eqref{eq:ord3} that 
\begin{equation}\label{eq:ord4}
\Od(\ol s)>1, \qq s=a,b,c,d.
\end{equation}
To see this, suppose for illustration that $\Od(\ol c)=1$, so that $\ol c=\ol\id$. 
By the third equation of \eqref{eq:ord3},
$\Od(\ol d)=1$, so that $\ol d=\ol\id$, and similarly for $\ol a$ and $\ol b$, implying
that $\Ga/\sN$ is trivial, a contradiction.

By induction, for $n \ge 1$,
\begin{alignat*}{3}
\ol{a}^{-n}\ol{b}\ol{a}^n&=\ol{b}^{2^n},&\qq
\ol{b}^{-2n}\ol{c}\ol{b}^{2n}&&=\ol{c}^{2^n},\\
\ol{c}^{-3n}\ol{d}\ol{c}^{3n}&=\ol{d}^{2^n},&\qq
\ol{d}^{-4n}\ol{a}\ol{d}^{4n}&&=\ol{a}^{2^n},
\end{alignat*}
whence, by setting $n=\Od(\ol a)$, etc,
\begin{equation}\label{eq:ord2}
\begin{alignedat}{2}
\Od(\ol{b})&\bigmid (2^{\Od(\ol{a})}-1),\qq\Od(\ol{c})&&\bigmid (2^{\Od(\ol{b})}-1),\\
\qq\Od(\ol{d})&\bigmid (2^{\Od(\ol{c})}-1),\qq\Od(\ol{a})&&\bigmid (2^{\Od(\ol{d})}-1),
\end{alignedat}
\end{equation}
where $u \mid v$ means that $v$ is a multiple of $u$.
We shall deduce a contradiction from \eqref{eq:ord4} and \eqref{eq:ord2}.
This is done as in \cite{Hg}, of which we reproduce the proof for completeness.

Let $p$ be the least prime factor of the four integers $\Od(\ol s)$, $s\in\{a,b,c,d\}$. By \eqref{eq:ord4},
$p>1$. Suppose that $p \mid \Od(\ol a)$ (with a similar argument
if $p\mid \Od(\ol s)$ for some other parameter $s$).
Then $p \mid 2^{\Od(\ol d)}-1$ by \eqref{eq:ord2}, and in particular $p$ is odd and 
therefore coprime with $2$.
Let $r$ be the multiplicative order of $2$ mod $p$, that is,
the least positive integer $r$ such that $p \mid 2^r-1$.
In particular, $r > 1$, so that $r$ has a prime factor $q$.
By Fermat's little theorem, $r \mid p-1$ so that $q<p$.
Furthermore, $r \mid \Od(\ol d)$ so that $q \mid \Od(\ol d)$, in contradiction of the
minimality of $p$.
We deduce that $\Ga$ has no proper, normal subgroup with finite index.

\section*{Acknowledgements} 
GRG was supported in part
by the Engineering and Physical Sciences Research Council under grant EP/103372X/1. 
ZL acknowledges support from the Simons Foundation under  grant $\#$351813.

\bibliography{new1}
\bibliographystyle{amsplain}

\end{document}